\documentclass[11pt]{amsart}
\usepackage{lipsum}
\usepackage{amsfonts}
\usepackage{amsmath}
\usepackage{scalerel}
\usepackage{subcaption}
\usepackage{amsthm}
\usepackage{epsf,latexsym}
\usepackage{amsfonts}
\usepackage{fontawesome}
\usepackage{tikz}
\usetikzlibrary{arrows,matrix,positioning}
\usetikzlibrary{cd}
\usepackage{etex}
\usepackage{xy}
\usepackage{amssymb}
\usepackage{epsf,latexsym}
\usetikzlibrary{arrows,matrix,positioning}
\usetikzlibrary{cd}
\usepackage{mathrsfs}
\usepackage{epstopdf}
\usepackage{verbatim}
\usepackage{ctable}
\usepackage{wrapfig}
\usepackage{booktabs}
\usepackage{xspace}
\usepackage{url}
\usepackage{mathrsfs}
\usepackage{graphicx}
\usepackage{epstopdf}
\usepackage{algorithmic}
\usepackage{verbatim}
\usepackage{ctable}
\usepackage[margin=3.0cm]{geometry}
\usepackage{hyperref}

\ifpdf
\DeclareGraphicsExtensions{.eps,.pdf,.png,.jpg}
\else
\DeclareGraphicsExtensions{.eps}
\fi
\usepackage{enumitem}
\setlist[enumerate]{leftmargin=.5in}
\setlist[itemize]{leftmargin=.5in}

\makeatletter
\@namedef{subjclassname@2020}{
	\textup{2020} Mathematics Subject Classification}
\makeatother

\newcommand\wh[1]{\hstretch{2}{\hat{\hstretch{.5}{#1}}}}
\newcommand{\wDelta}{\wh{\Delta}}

\newcommand{\wsigma}{\wh{\sigma}}
\newcommand{\wtau}{\wh{\tau}}
\newcommand{\wgamma}{\wh{\gamma}}
\newcommand{\tS}{\mathtt{S}}
\newcommand{\wS}{\wh{\mathtt{S}}}

\newcommand{\tJ}{\mathtt{J}}
\newcommand{\tI}{\mathtt{I}}

\newcommand{\R}{{\ensuremath{\mathbb{R}}}}
\newcommand{\calR}{{\ensuremath{\mathcal{R}}}}
\newcommand{\calJ}{{\ensuremath{\mathcal{J}}}}

\newcommand{\im}{\ensuremath\mathrm{im\,}}

\newcommand{\LBcs}{\ensuremath{\mathrm{LB}^{\text{\faStar}}}} 
\newcommand{\LBos}{\ensuremath{\mathrm{LB}^{\text{\faStarO}}}}
\newcommand{\LB}{\ensuremath{\mathrm{LB}}} 
\newcommand{\spl}{\ensuremath{\mathcal{S}}}
\newcommand{\hspl}{\ensuremath{\mathcal{H}}}

\newcommand{\mesh}{tetrahedral partition}
\newcommand{\meshplural}{tetrahedral partitions}

\newtheorem{thm}{Theorem}[section]

\newtheorem{proposition}[thm]{Proposition}
\newtheorem{corollary}[thm]{Corollary}
\newtheorem{theorem}[thm]{Theorem}

\theoremstyle{definition}
\newtheorem{definition}[thm]{Definition}
\newtheorem{convention}[thm]{Convention}
\newtheorem{example}[thm]{Example}
\newtheorem{notation}[thm]{Notation}
\newtheorem{remark}[thm]{Remark}

\newtheorem{construction}[thm]{Construction}

\title{A lower bound for the dimension of tetrahedral splines in large degree}

\author[M. DiPasquale]{Michael DiPasquale}
\address{Michael DiPasquale\\     
	Department of Mathematics\\     
	Colorado State University}  
\email{michael.dipasquale@colostate.edu}
\urladdr{\url{https://midipasq.github.io/}}
\author[N. Villamizar]{Nelly Villamizar} 
\address{Nelly Villamizar\\
	Department of Mathematics\\
	Swansea University}
\email{n.y.villamizar@swansea.ac.uk}
\urladdr{\url{https://sites.google.com/site/nvillami}}

\begin{document}

\begin{abstract}
We derive a formula which is a lower bound on the dimension of trivariate splines on a tetrahedral partition which are continuously differentiable of order $r$ in large enough degree.  While this formula may fail to be a lower bound on the dimension of the spline space in low degree, we illustrate in several examples considered by Alfeld and Schumaker that our formula may give the exact dimension of the spline space in large enough degree if vertex positions are generic.  In contrast, for splines continuously differentiable of order $r>1$, every lower bound in the literature diverges (often significantly) in large degree from the dimension of the spline space in these examples.  We derive the bound using commutative and homological algebra.
\end{abstract}

\keywords{
Trivariate spline spaces, tetrahedral partitions, dimension of spline spaces.
}
\subjclass[2020]{
65D07, 41A15, 13D02.
}

\maketitle

\section{Introduction}
A multivariate spline is a piecewise polynomial function on a partition $\Delta$ of some domain $\Omega\subset\R^n$ which is continuously differentiable to order $r$ for some integer $r\ge 0$.  Multivariate splines play an important role in many areas such as finite elements, computer-aided design, isogeometric analysis, and data fitting~\cite{LaiSchumaker,Isogeometric}.  Splines on both triangulations and \meshplural{} have been used to solve boundary value problems by the finite element method; some early references are \cite{Courant,Strang,Z1973}, see also \cite{LaiSchumaker} and the references therein.  For quite recent applications in isogeometric analysis, in \cite{EE2016,EE2017}, Engvall and Evans outline frameworks to parametrize volumes for isogeometric analysis using triangular and tetrahedral B\'ezier elements.  While Engvall and Evans in~\cite{EE2017} focus on $C^0$ elements, $C^r$ tetrahedral B\'ezier elements are also used for isogeometric analysis -- see Xia and Qiang~\cite{XQ17}.  In these applications it is important to construct a basis, often with prescribed properties, for splines of bounded total degree.  Thus it is important to compute the dimension of the space of multivariate splines of bounded degree on a fixed partition.  We write $\spl^r_d(\Delta)$ for the vector space of piecewise polynomial functions of degree at most $d$ on the partition $\Delta$ which are continuously differentiable of order $r$.

A formula for the dimension of $C^1$ splines on triangulations was proposed by Strang~\cite{Strang} and proved for \textit{generic} triangulations by Billera~\cite{Homology}.  Subsequently the problem of computing the dimension of planar splines on triangulations has received considerable attention using a wide variety of techniques, see~\cite{SchumakerU,AS4r,AS3r,SuperSpline,WhiteleyComb,WhiteleyM,Homology,DimSeries,LCoho,MinReg}.  Alfeld and Schumaker show in~\cite{AS3r} that the dimension of $\spl^r_d(\Delta)$, for (most) planar triangulations $\Delta$ and $d\ge 3r+1$, is given by a quadratic polynomial in $d$ whose coefficients are determined from simple data of the triangulation.  The computation of $\dim \spl^r_d(\Delta)$ for planar $\Delta$ when $r+1\le d\le 3r$ remains an open problem, although Whiteley has shown that there are only trivial splines on $\Delta$ in degrees at most $\frac{3r+1}{2}$ if $\Delta$ is generic with a triangular boundary \cite{WhiteleyComb}.  (This result of Whiteley is an essential ingredient of our lower bound for trivariate splines.)

The literature on computing the dimension of trivariate splines on \meshplural{} is much less conclusive.  The dimension has been computed if $r=0$ (see~\cite{LocSup} or~\cite{Alg}), and also if $r=1$, $d\ge 8$, and $\Delta$ is generic by Alfeld, Schumaker, and Whiteley~\cite{ASWTet}.  For $r>1$ bounds on $\dim \spl^r_d(\Delta)$ have been computed in~\cite{Alfeld96,Lau,Tri,D3}. 
A major difficulty is that computing $\dim \spl^r_d(\Delta)$ exactly in large degree for arbitrary \meshplural{} cannot be done without computing the dimension of splines on planar triangulations exactly in \textit{all} degrees (see~\cite[Remark~65]{ASWTet}).  More precisely, to compute $\dim \spl^r_d(\Delta)$ exactly for $d\gg 0$, we must be able to compute the space of \textit{homogeneous} splines $\dim \hspl^r_d(\Delta_\gamma)$ exactly in all degrees, where $\gamma$ is a vertex of $\Delta$ and $\Delta_\gamma$ is the \textit{star} of $\gamma$  (that is, $\Delta_\gamma$ consists of all tetrahedra having $\gamma$ as a vertex).  The computation of such spline spaces has only been made for $r\le 1$; for $r=1$ $\Delta$ is required to be generic~\cite{ASWTet}.  For this crucial computation we rely on our previous paper~\cite{PaperA}, where we establish a lower bound on the dimension of homogeneous splines on vertex stars.

In our main result, Theorem~\ref{thm:LBGenericTet}, we establish a formula which is a lower bound on the dimension of the spline space on most \meshplural{} of interest (any triangulation of a compact three-manifold with boundary) in large enough degree.  While we have no proof of what degree is large enough, empirical evidence suggests that, for generic $\Delta$, our formula begins to be a lower bound in degrees close to the \textit{initial degree} of $\spl^r(\Delta)$; by the initial degree of $\spl^r(\Delta)$ we mean the smallest degree $d$ in which $\spl^r_d(\Delta)$ admits a spline which is not globally polynomial.  If $\Delta$ is generic, in Section~\ref{sec:Examples} we illustrate for several examples considered by Alfeld and Schumaker~\cite{Tri} that our formula gives the exact dimension of $\spl^r_d(\Delta)$ beginning at the initial degree of $\spl^r(\Delta)$.  It is worth noting that none of the lower bounds in the literature~\cite{Lau,Tri,D3} give the exact dimension of the generic spline space (even in large degree) on these examples if $r\ge 2$.

The paper is organized as follows.  In Section~\ref{s:BoundStatement} we explicitly state our lower bound in purely numerical terms allowing a straightforward application of the formula and illustrate in an example.  In Section~\ref{sec:Background} we set up notation and give relevant homological background, and in Section~\ref{sec:general_lowerbound} we prove the bound of Theorem~\ref{thm:LBGenericTet}.
Section~\ref{sec:Examples} is devoted to illustrating our bounds in a number of examples and comparing them to the bounds in~\cite{Tri,D3}.  Finally, we give some concluding remarks in Section~\ref{sec:ConcludingRemarks}.  We draw special attention to Remark~\ref{rem:AS8r+1}, as we think it likely that work of Alfeld, Schumaker, and Sirvent~\cite{LocSup} implies that our formula is a lower bound in degrees at least $8r+1$.  Our methods are sufficiently different from~\cite{LocSup} that we do not attempt to prove this here.

\section{The lower bound}~\label{s:BoundStatement}
Throughout we let $\Delta$ be a \mesh{}.  We are more precise in Section~\ref{sec:Background}; for now it is sufficient for the reader to think of a \mesh{} as a triangulation of a three-dimensional polytope.   
We use $\Delta_i$ and $\Delta^\circ_i$ to denote the $i$-faces and interior $i$-faces (respectively) of $\Delta$.  We put $f_i(\Delta)=|\Delta_i|$ and $f^\circ_i(\Delta)=|\Delta^\circ_i|$ (if $\Delta$ is clear we simply write $f_i$ and $f^\circ_i$).  We define the following data for each edge of $\Delta$.
\begin{notation}[Data attached to edges]\label{not:EdgeData}
	For a given $r\geq 0$ and $\tau\in\Delta_1$, 
	\begin{itemize}[leftmargin=*]
		\renewcommand{\labelitemi}{\scalebox{0.5}{$\blacksquare$}}
		\item let $t_\tau=\min\{n_\tau,r+2\}$, where  $n_\tau=\#\{\sigma\in\Delta_2:\tau\subset\sigma\}$ is the number of two-dimensional faces having $\tau$ as an edge;
		\item and the constants \ 
		$\displaystyle q_\tau = \biggl\lfloor \frac{t_\tau(r+1)}{t_\tau-1}\biggr\rfloor,\quad  a_\tau=t_\tau(r+1)-(t_\tau-1) q_\tau\ , \text{ and}\quad
		b_\tau=t_\tau-1-a_\tau\,.$
		($q_\tau$ and $a_\tau$ 
		are the quotient and remainder obtained when dividing  $t_\tau(r+1)$ by $t_\tau-1$\,.)
	\end{itemize}
\end{notation}
\noindent   
Given a vertex $\gamma\in\Delta$, we call the set of tetrahedra of $\Delta$ which contain $\gamma$ the \textit{star of $\gamma$} and we denote this \mesh{} by $\Delta_\gamma$.  If $\gamma$ is an interior vertex of $\Delta$, so $\gamma$ is completely surrounded by tetrahedra, then we call $\Delta_\gamma$ a \textit{closed} vertex star.  If $\gamma$ is a boundary vertex of $\Delta$, so $\gamma$ is not completely surrounded by tetrahedra, then we call $\Delta_\gamma$ an \textit{open} vertex star.  For a closed vertex star $\Delta_\gamma$ we define the constant
\begin{equation}\label{eq:Dgamma}
D_\gamma:=\left\lbrace
\begin{array}{ll}
2r & f^\circ_1=4\\
\lfloor (5r+2)/3 \rfloor & f_1^\circ=5\\
\lfloor (3r+1)/2 \rfloor & f_1^\circ\ge 6\ .
\end{array}
\right.
\end{equation}
\noindent The following convention for binomial coefficients is crucial in all our formulas.

\begin{convention}\label{conv:BinomialCoefficients}
	For binomial coefficients we put $\binom{n}{k}=0$ when $n<k$.
\end{convention}
\noindent If $\Delta_\gamma$ is a closed vertex star we define
\begin{align}\label{eq:LBclosedstar}
	\LBcs(\Delta_\gamma,d,r):= 2\binom{d+2}{2}+ & \left(f^\circ_2(\Delta_\gamma)-\sum\limits_{\tau\in\Delta^\circ_1} t_\tau \right) \binom{d+1-r}{2}\\
	&+\sum\limits_{\tau\in\Delta^\circ_1} \left( a_\tau\binom{d+1-q_\tau}{2}+b_\tau\binom{d+2-q_\tau}{2}\right).\nonumber
\end{align}
We write $\LBcs(d)$ instead of $\LBcs(\Delta,d,r)$ if $\Delta, r$ are understood.  In~\cite{PaperA} we show that $\LBcs(\Delta,d,r)$ is a lower bound for \textit{homogeneous} splines on a generic closed vertex star for $d>D_\gamma$ and~\cite{ANS96} shows there is equality for $d\ge 3r+2$.  If $\Delta_\gamma$ is an \textit{open} vertex star we define
\begin{align}\label{eq:LBopenstar}
	\LBos(\Delta_\gamma,d,r) := \binom{d+2}{2}+ & \left(f^\circ_2(\Delta_\gamma)-\sum\limits_{\tau\in\Delta^\circ_1} t_\tau \right) \binom{d+1-r}{2}\\
	&+\sum\limits_{\tau\in\Delta^\circ_1} \left( a_\tau\binom{d+1-q_\tau}{2}+b_\tau\binom{d+2-q_\tau}{2}\right). \nonumber
\end{align}
Again we write $\LBos(d)$ if $\Delta,r$ are understood.  In~\cite{ANS96} it is shown that $\LBos(\Delta,d,r)$ is a lower bound for homogeneous splines on an open vertex star, with equality if $d\ge 3r+2$.

If $\gamma$ is a vertex of $\Delta$ we attach the following constant to $\gamma$, which we call $N_\gamma$.  For a real number $r$, we put $[r]_+=\max\{r,0\}$.
\begin{equation}\label{eq:Ngamma}
N_\gamma=
\begin{cases}
\sum\limits_{d=r+1}^{D_\gamma} \left[\binom{d+2}{2}-\LBcs(d)\right]
+\sum\limits_{d=D_\gamma+1}^{3r+1} \left[ \binom{d+2}{2}-\LBcs(d) \right]_+ & \mbox{ if } \gamma\in\Delta^\circ_0
\\
\sum\limits_{d=r+1}^{3r+1} \left[\binom{d+2}{2}-\LBos(d)\right]_+ & \mbox{ if } \gamma\in\Delta_0\setminus \Delta^\circ_0 \ ,
\end{cases}
\end{equation}
where $D_\gamma$ is the constant attached to closed vertex stars defined in Equation~\eqref{eq:Dgamma}.

\begin{remark}
	When $\gamma\in\Delta^\circ_0$ and $r+1\le d\le D_\gamma$, notice that the contribution to $N_\gamma$ can be negative, while if $d>D_\gamma$, only positive contributions are counted.  This is a crucial difference between the contributions from interior vertices and the contributions from boundary vertices.
\end{remark}

\begin{theorem}[Lower bound in large degree for \meshplural{}]\label{thm:LBGenericTet}
	Suppose $\Delta$ is a \mesh{}.  If $d\gg 0$ then $\dim \spl^r_d(\Delta)\ge \LB(\Delta,d,r)$, where
	\begin{align}\label{eq:LBtetrahedral}
		\LB(\Delta,d,r) := & \bigl(f_3-f^\circ_2+f^\circ_1\bigr)\binom{d+3}{3}+ \left(f^\circ_2-\sum\limits_{\tau\in\Delta^\circ_1} t_\tau \right)\binom{d+2-r}{3}\\\nonumber
		&+\sum\limits_{\tau\in\Delta^\circ_1} \left( a_\tau\binom{d+2-q_\tau}{3}+b_\tau\binom{d+3-q_\tau}{3}\right)-f^\circ_0\binom{r+3}{3}+\sum_{\gamma\in\Delta_0} N_\gamma \ .
	\end{align}
\end{theorem}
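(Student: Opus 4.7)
The plan is to apply the standard algebraic--homological framework for multivariate splines, using the Schenck--Stillman style chain complex. After coning $\Delta$ from the origin, work in $R=\R[x,y,z,w]$ and associate to $\Delta$ the complex $\calR/\calJ$ whose graded pieces in degree $d$ are
\[
\bigoplus_{T\in\Delta_3}R_d \longrightarrow \bigoplus_{\sigma\in\Delta_2^\circ}(R/\langle\ell_\sigma^{r+1}\rangle)_d \longrightarrow \bigoplus_{\tau\in\Delta_1^\circ}(R/J(\tau))_d \longrightarrow \bigoplus_{\gamma\in\Delta_0^\circ}(R/J(\gamma))_d,
\]
where $J(\tau)=\sum_{\sigma\supset\tau}\langle\ell_\sigma^{r+1}\rangle$ and $J(\gamma)=\sum_{\sigma\ni\gamma}\langle\ell_\sigma^{r+1}\rangle$. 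The top homology $H_3$ agrees (up to a shift) with the graded spline module, yielding
\[
\dim\spl^r_d(\Delta) = \chi_d(\calR/\calJ) + \dim H_2(\calR/\calJ)_d - \dim H_1(\calR/\calJ)_d + \dim H_0(\calR/\calJ)_d,
\]
so the problem splits into an Euler characteristic computation and a bound on the lower homology for $d\gg 0$.

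First I would compute $\chi_d$ by summing the four Hilbert functions. The top term contributes $f_3\binom{d+3}{3}$; the identity $\dim(R/\langle\ell^{r+1}\rangle)_d=\binom{d+3}{3}-\binom{d+2-r}{3}$ handles each $2$-face; for each interior edge $\tau$, the ideal $J(\tau)$ is (in generic position) generated by $n_\tau$ powers of generic linear forms in the two variables transverse to $\tau$, and its Hilbert function is determined by the classical formula for generic powers of linear forms in two variables, yielding precisely the constants $t_\tau,q_\tau,a_\tau,b_\tau$ in the edge sum of~\eqref{eq:LBtetrahedral}. After collecting terms, the coefficient of $\binom{d+3}{3}$ assembles as $f_3-f_2^\circ+f_1^\circ-f_0^\circ$; rearranging the last $-f_0^\circ\binom{d+3}{3}$ together with the vertex-quotient contribution produces the $(f_3-f_2^\circ+f_1^\circ)\binom{d+3}{3}$ and $-f_0^\circ\binom{r+3}{3}$ summands in the statement.

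Next I would control the lower homology. Standard results ensure that $H_0(\calR/\calJ)$ and $H_1(\calR/\calJ)$ are finite-dimensional and hence vanish in all sufficiently large degrees, so only $H_2$ contributes to the correction. A Mayer--Vietoris type comparison between $\calR/\calJ$ and the analogous complexes on each vertex star $\Delta_\gamma$ identifies $H_2(\calR/\calJ)_d$ for $d\gg 0$ with a direct sum of vertex-local contributions; at each interior vertex the local contribution reduces to the deviation between $\dim\hspl^r_d(\Delta_\gamma)$ and $\LBcs(\Delta_\gamma,d,r)$, and at each boundary vertex to the analogous deviation from $\LBos$. By~\cite{PaperA}, these deviations are nonnegative and bounded, and accumulating them over the finite range of degrees where $\LBcs$ (respectively $\LBos$) may fall short of $\binom{d+2}{2}$ gives exactly the constants $N_\gamma$ from~\eqref{eq:Ngamma}, with Convention~\ref{conv:BinomialCoefficients} governing the small-degree edge cases.

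The main obstacle will be the vertex-local decomposition of $H_2(\calR/\calJ)_d$ in large degree. A naive Mayer--Vietoris over overlapping vertex stars introduces connecting pieces supported on shared $2$-faces and edges, and showing these pieces contribute nothing for $d\gg 0$ requires careful control of the higher local cohomology of the modules $R/J(\tau)$ and $R/J(\gamma)$. Once this decoupling is established, the bound assembles from three ingredients: the explicit Euler-characteristic computation, the degree-wise vanishing of $H_0$ and $H_1$, and the vertex-star lower bound from~\cite{PaperA} (which in turn uses Whiteley's initial-degree theorem). Combining them yields $\dim\spl^r_d(\Delta)\ge\LB(\Delta,d,r)$ for all sufficiently large $d$.
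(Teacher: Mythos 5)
Your overall architecture coincides with the paper's: the Euler characteristic of the Billera--Schenck--Stillman complex on $\wDelta$, vanishing of $H_1(\calR/\calJ[\wDelta])_d$ for $d\gg 0$, a vertex-local description of $H_2(\calR/\calJ[\wDelta])_d$ in large degree, and the vertex-star bounds from \cite{PaperA}. Two points need attention. First, the step you flag as the main obstacle --- decomposing $H_2(\calR/\calJ[\wDelta])_d$ for $d\gg 0$ into contributions from the stars $\Delta_\gamma$ --- is indeed the crux, and your Mayer--Vietoris sketch does not resolve it. The paper does not prove this by patching vertex stars together; it invokes \cite[Corollary~9.2]{AssHom}, which identifies $\dim H_2(\calR/\calJ[\wDelta])_d$ for $d\gg 0$ with $\sum_{\gamma\in\Delta_0}\sum_{i\ge 0}\bigl(\dim\hspl^r_i(\Delta_\gamma)-\chi(\calR/\calJ[\Delta_\gamma],i)\bigr)$ via the associated primes of the homology module. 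As written, your proposal assumes exactly what needs to be shown here; you must either reprove that localization statement or cite it.

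Second, your bookkeeping at interior vertices is wrong in a way that would change the formula. You assert the vertex-local deviations are nonnegative and that accumulating them over the degrees where $\LBcs$ falls short of $\binom{d+2}{2}$ gives $N_\gamma$. But for $\gamma\in\Delta^\circ_0$ the definition~\eqref{eq:Ngamma} keeps the \emph{signed} quantities $\binom{i+2}{2}-\LBcs(\Delta_\gamma,i,r)$ for $r+1\le i\le D_\gamma$, and these can be negative; replacing them by their positive parts would produce a larger constant that is no longer a valid lower bound. The signed terms are forced by the Euler characteristic: $\dim\tJ(\wgamma)_d=\binom{d+3}{3}+\sum_{i=0}^{D_\gamma}\bigl(\dim\tJ(\gamma)_i-\binom{i+2}{2}\bigr)$, and the nonpositive deficit $\dim\tJ(\gamma)_i-\binom{i+2}{2}$ must be played off against the local homology $\dim H_1(\calJ[\Delta_\gamma])_i$. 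That cancellation requires the \emph{equality} $\dim H_1(\calJ[\Delta_\gamma])_i=-\chi(\calJ[\Delta_\gamma],i)$ for $i\le D_\gamma$, which is exactly what Theorem~\ref{thm:WhitelyGenericLowDegree} (the Whiteley-type result of \cite{PaperA}) supplies; an inequality does not suffice at this point. You also need the result of \cite{ANS96} that $\dim\hspl^r_i(\Delta_\gamma)=\binom{i+2}{2}+\chi(\calJ[\Delta_\gamma],i)$ for $i\ge 3r+2$ in order to truncate the vertex sums at $i=3r+1$. With the localization result quoted and the signed low-degree terms handled as above, your outline matches the paper's proof.
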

\noindent If $\Delta$ and $r$ are understood then we abbreviate $\LB(\Delta,d,r)$ to $\LB(d)$.
\subsection{Example}\label{exam:intro}
We illustrate Theorem~\ref{thm:LBGenericTet} for $C^2$ splines on the \mesh{} in Fig.~\ref{fig:3DMS}, which is a three-dimensional analog of the Morgan-Scott triangulation~\cite{Morgan}.  
\begin{figure}
	\centering
	\includegraphics[scale=.732]{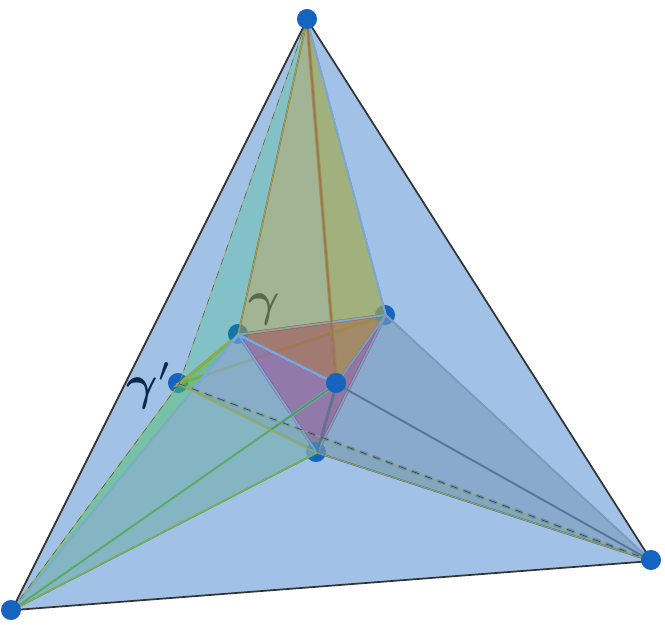}
	\includegraphics[scale=.732]{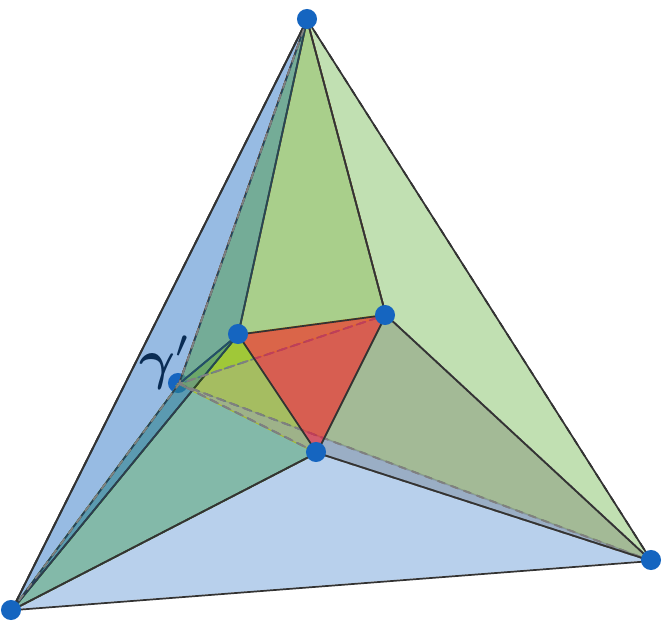}
	\includegraphics[scale=.732]{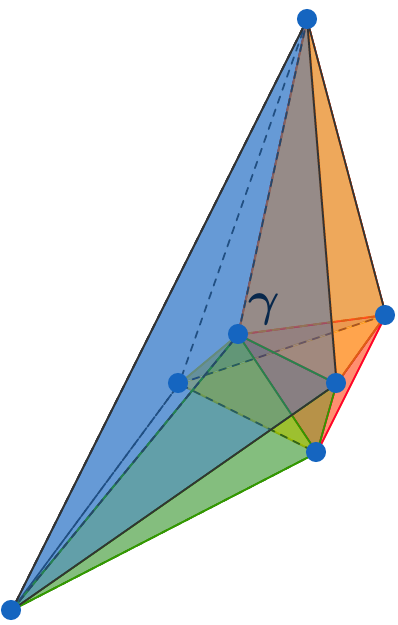}
	\caption{A three-dimensional version of the Morgan--Scott triangulation, the star of the boundary vertex $\gamma'$ (center), and the star of the interior vertex $\gamma$ (right).}\label{fig:3DMS}
\end{figure}
If $\gamma$ is an interior vertex then $\Delta_\gamma$ is the triangulated octahedron on the right in Fig.~\ref{fig:3DMS}.  
We have $f^\circ_0(\Delta_\gamma)=1$, $f^\circ_1(\Delta_\gamma)=6$, and $f^\circ_2(\Delta_\gamma)=12$.  For every $\tau\in(\Delta_\gamma)_1^\circ$, we have $n_\tau=4$ and hence $t_\tau=\min\bigl\{n_\tau,r+2\bigr\}=4$.  We compute $q_\tau=4,a_\tau=0,$ and $b_\tau=3$, hence by Equation~\eqref{eq:LBclosedstar},
\[
\LBcs\bigl(\Delta_\gamma,d,2\bigr)=2\binom{d+2}{2}-12\binom{d-1}{2}+18\binom{d-2}{2}\ .
\]
If $\gamma'$ is a boundary vertex, then $\Delta_{\gamma'}$ is the cone over the Morgan-Scott triangulation (see the star of vertex $\gamma'$ in Fig.~\ref{fig:3DMS}).  
We have $f^\circ_0(\Delta_{\gamma'})=0$, $f^\circ_1(\Delta_{\gamma'})=3$, and $f^\circ_2(\Delta_{\gamma'})=9$.  For every $\tau\in(\Delta_{\gamma'})_1^\circ$, we have $n_\tau=4$ and hence $t_\tau=\min\bigl\{n_\tau,r+2\bigr\}=4$.  
Again we have $q_\tau=4,a_\tau=0,$ and $b_\tau=3$.  Thus, following Equation~\eqref{eq:LBopenstar},
\[
\LBos\bigl(\Delta_{\gamma'},d,2\bigr)=\binom{d+2}{2}-3\binom{d-1}{2}+9\binom{d-2}{2}\ .
\]
In Table~\ref{tbl:genericMS} we record the values of $\LBcs\bigl(\Delta_\gamma,d,2\bigr)$, $\LBos\bigl(\Delta_{\gamma'},d,2\bigr)$, and $\binom{d+2}{2}$ where $\gamma$ is an interior vertex of $\Delta$ and $\gamma'$ is a boundary vertex of $\Delta$.

\begin{table}
	\centering
	\renewcommand{\arraystretch}{1.5}
	\begin{tabular}{c|cccccccc}
		$d$ & 3 & 4 & 5 & 6 & 7 & 8 & 9 & 10 \\
		\hline
		$\binom{d+2}{2}$ & 10 & 15 & 21 & 28 & 36 & 45 & 55 & 66 \\
		\hline
		$\LBcs(\Delta_\gamma,d,2)$      & 8 & 12 & 24 & 44 & 72 & 108 & 152 & 204 \\
		\hline
		$\LBos(\Delta_{\gamma'},d,2)$     & 7 & 15 & 30 & 52 & 81 & 117 & 160 & 210 \\
	\end{tabular}
	\caption{Lower bounds for the star of an interior ($\gamma$) and boundary ($\gamma'$) vertex of the simplicial complex in Fig.~\ref{fig:3DMS}. \vspace{-0.5cm}}\label{tbl:genericMS}
\end{table}

Now we turn to computing the bound $\LB\bigl(\Delta,d,2\bigr)$ in Theorem~\ref{thm:LBGenericTet} for $\dim \spl^2_d(\Delta)$, where $\Delta$ is the full simplicial complex depicted in Fig.~\ref{fig:3DMS}.  If $\gamma$ is a boundary vertex then $N_\gamma=3$ (corresponding to the one difference in degree $3$ in Table~\ref{tbl:genericMS}).  If $\gamma$ is an interior vertex then $D_\gamma=3$.  Reading down each column in the first two rows of Table~\ref{tbl:genericMS} we get $N_\gamma=(10-8)+(15-12)=5$.  Thus $\sum_{\gamma\in\Delta_0} N_\gamma=4\cdot 3+4\cdot(5)=32$.

For the remaining statistics we have $f_0^\circ=4,f_1^\circ=18,f_2^\circ=28,$ and $f_3=15$.  For each interior $1$-face $\tau$ we have $n_\tau=t_\tau=4, d_\tau=4, a_\tau=0, b_\tau=3$.  Thus, by Theorem~\ref{thm:LBGenericTet}, 
\[
\LB(\Delta,d,2)= 5\binom{d+3}{3}-44\binom{d}{3}+54\binom{d-1}{3}-8 = \frac{5}{2}d^3-27d^2+\frac{187}{2}d-57,
\]
where the second equality holds as long as $d\ge 1$.  Table~\ref{tbl:3DMSLower} compares the values of $\LB(\Delta,d,2)$ and $\dim \spl^2_d(\Delta)$ for generic positions of the vertices of $\Delta$.  Notice that while $\LB(\Delta,d,2)$ is neither an upper or lower bound for $d\le 6$, it predicts the correct dimension of the generic spline space for $d\ge 7$.  Incidentally, $d=7$ is the initial degree of $\spl^2(\Delta)$; that is, the first non-trivial splines appear in degree $7$.  We computed the exact dimension of the spline space for generic vertex positions using the \texttt{Algebraic Splines} package in Macaulay2~\cite{M2}.  Furthermore, a computation in Macaulay2 shows that $\dim \spl^r_d(\Delta)=\frac{5}{2}d^3-27d^2+\frac{187}{2}d-57$ for $d\gg 0$, so our lower bound gives the exact dimension of the spline space for $r=2$ when $d\ge 7$.  Code to compute all examples in this paper can be found on the first author's website under the Research tab:~\url{https://midipasq.github.io/}.

\begin{table}
	\centering
	\renewcommand{\arraystretch}{1.5}
	\begin{tabular}{c|ccccccccccc}
		$d$ & 0 & 1 & 2 & 3 & 4 & 5 & 6 & 7 & 8 & 9 & 10 \\
		\hline
		$\binom{d+3}{3}$ & 1 & 4 & 10 & 20 & 35 & 56 & 84 & 120 & 165 & 220 & 286 \\
		\hline
		$\LB(\Delta,d,2)$ & -57 & 12 & 42 & 48 & 45 & 48 & 72 & 132 & 243 & 420 & 678 \\
		\hline
		$\dim \spl^2_d(\Delta)$ & 1 & 4 & 10 & 20 & 35 & 56 & 84 & 132 & 243 & 420 & 678
	\end{tabular}
	\caption{Illustrating Theorem~\ref{thm:LBGenericTet} for the \mesh{} in Fig.~\ref{fig:3DMS}. \vspace{-0.5cm}}\label{tbl:3DMSLower}
\end{table}

\section{Background and Homological Methods}\label{sec:Background}
In this section we introduce the homological methods of Billera~\cite{Homology} and Schenck and Stillman~\cite{LCoho}.  A \textit{simplex} in $\R^n$ is the convex hull of $i\le n+1$ vertices which are in linearly general position (no three on a line, no four on a plane, etc.).  A \textit{face} of a simplex is the convex hull of any subset of the vertices which define it (thus a face of a simplex is a simplex).  An $i$-simplex (or $i$-face) is the convex hull of $i+1$ vertices in linearly general position; $i$ is the \textit{dimension} of the $i$-simplex or $i$-face.

\begin{definition}
	A \textit{simplicial complex} $\Delta$ is a collection of simplices in $\R^n$ satisfying:
	\begin{itemize}[leftmargin=*]
		\renewcommand{\labelitemi}{\scalebox{0.5}{$\blacksquare$}}	
		\item If $\beta\in\Delta$ then so are all of its faces.
		\item If $\beta_1,\beta_2\in\Delta$ then $\beta_1\cap\beta_2$ is either empty or a proper face of both $\beta_1$ and $\beta_2$.
	\end{itemize}
	We also refer to the simplices of $\Delta$ as \textit{faces} of $\Delta$.  The \textit{dimension} of $\Delta$ is the dimension of a maximal simplex of $\Delta$ under inclusion. If all maximal simplices have the same dimension we said that $\Delta$ is \textit{pure} .
\end{definition}
In this paper we only consider \textit{finite} simplicial complexes.  If $\beta$ is a face of $\Delta$ of dimension $i$ we call $\beta$ an $i$-face.  Denote by $\Delta_i$ and $\Delta_i^\circ$ the set $i$-faces of $\Delta$ and interior $i$-faces of $\Delta$, respectively.  We write $f_i(\Delta)$ and $f^\circ_i(\Delta)$ for the number of $i$-faces and interior $i$-faces, respectively (we write $f_i$ and $f^\circ_i$ if $\Delta$ is understood).  By an abuse of notation, we will identify $\Delta$ with its underlying space $\bigcup_{\beta\in\Delta} \beta\subset\R^n$.

\begin{definition}
	If $\Delta$ is a simplicial complex and $\beta$ is a face of $\Delta$, then the \textit{link} of $\beta$ is the set of all simplices $\gamma$ in $\Delta$ so that $\beta\cap\gamma=\emptyset$ and $\beta\cup\gamma$ is a face of $\Delta$.  The \textit{star} of $\beta$ is the union of the link of $\beta$ with the set of all simplices which contain $\beta$ (including $\beta$).  We denote the star of $\beta$ by $\Delta_\beta$.
\end{definition}
If $\gamma$ is a vertex of a simplicial complex $\Delta$ so that all maximal simplices of $\Delta$ contain $\gamma$ (so $\Delta_\gamma=\Delta$), then we call $\Delta$ the \textit{star of} $\gamma$ and we say $\Delta$ is a \textit{vertex star}.  If $\gamma$ is an \textit{interior} vertex we call $\Delta$ a closed vertex star and if $\gamma$ is a boundary vertex then we call $\Delta$ an open vertex star.

We refer to the set of points in $\R^{n+1}$ of unit norm as the $n$-sphere, and the set of points in $\R^n$ with norm at most one as the $n$-disk.  A \textit{homeomorphism} $f:X\to Y$ between two sets is a continuous bijection; if such an $f$ exists we say $X$ and $Y$ are \textit{homeomorphic}.

\begin{definition}[Simplicial $n$-manifold with boundary]
	If $\Delta$ is a finite simplicial complex in $\R^n$, we say it is a \textit{simplicial} $n$-\textit{manifold} with boundary if it satisfies the conditions:
	\begin{itemize}[leftmargin=*]
		\renewcommand{\labelitemi}{\scalebox{0.5}{$\blacksquare$}}	
		\item $\Delta$ is pure $n$-dimensional,
		\item the link of every vertex of $\Delta$ is homeomorphic to an $(n-1)$-sphere (if the vertex is \textit{interior}) or an $(n-1)$-disk (if the vertex is on the boundary),
		\item and every $(n-1)$-simplex of $\Delta$ is either the intersection of two $n$-simplices of $\Delta$ or it is on the boundary of $\Delta$ and so contained in only one $n$-simplex of $\Delta$.
	\end{itemize}
\end{definition}

\begin{example}
	Consider the simplicial complex in Fig.~\ref{fig:3DMS}, which is a simplicial $3$-mani\-fold with boundary homeomorphic to the $3$-disk.  The star of the interior vertex $\gamma$ is shown in the center of Fig.~\ref{fig:3DMS}; the link of the vertex $\gamma$ is obtained from the star of $\gamma$ by removing $\gamma$ and all simplices which contain it.  The link of $\gamma$ is homeomorphic to a $2$-sphere.  Likewise, the star of the boundary vertex $\gamma'$ is shown on the right in Fig.~\ref{fig:3DMS}; the link of the vertex $\gamma'$ is obtained from it by removing the vertex $\gamma'$ and all simplices which contain it.  The link of $\gamma'$ is the usual planar Morgan-Scott configuration~\cite{Morgan}, and is homeomorphic to a $2$-disk.
\end{example}

Throughout this paper we abuse notation by referring to a simplicial $n$-manifold with boundary simply as a simplicial complex.  We refer to a simplicial $2$-manifold with boundary as a \textit{triangulation} and a simplicial $3$-manifold with boundary as a \textit{\mesh{}}.

Write $\tS=\R[x_1,\ldots,x_n]$ for the polynomial ring in $n$ variables and $\tS_{\le d}$ for the $\R$-vector space of polynomials of total degree most $d$, and $\tS_d$ for the $\R$-vector space of polynomials which are homogeneous of degree exactly $d$.  For a fixed integer $r$, we denote by $C^r(\Delta)$ the set of all functions $F: \Delta\to\R$ which are continuously differentiable of order $r$.
\begin{definition}\label{def:SplineSpaces}
	Let $\Delta\subset\R^n$ be an $n$-dimensional simplicial complex.  We denote by
	\[
	\spl^r(\Delta):= \bigl\{F\in C^r(\Delta)\colon F|_\iota\in \tS\; \mbox{ for all }\iota\in\Delta_n \bigr\}
	\]
	the vector space of splines which are continuously differentiable of order $r$, by
	\[
	\spl^r_d(\Delta):= \bigl\{F\in C^r(\Delta)\colon F|_\iota\in \tS_{\le d}\; \mbox{ for all }\iota\in\Delta_n \bigr\}
	\]
	the subspace of $\spl^r(\Delta)$ consisting of splines of degree at most $d$, and by
	\[
	\hspl^r_d(\Delta):= \bigl\{F\in C^r(\Delta)\colon F|_\iota\in \tS_d\; \mbox{ for all }\iota\in\Delta_n\bigr\}
	\]
	the subspace of $\spl^r(\Delta)$ consisting of splines whose restriction to each $n$-dimensional simplex is a homogeneous polynomial of degree $d$.  We call splines in $\hspl^r_d(\Delta)$ \textit{homogeneous} splines.
\end{definition}
If $\Delta$ is the star of a vertex, then one can show that
\begin{equation}\label{eq:StarVertexHomogDecomp}
\spl^r(\Delta)\cong \bigoplus\limits_{i\ge 0} \hspl^r_i(\Delta), \mbox{ and}\quad \spl^r_d(\Delta)\cong \bigoplus\limits_{i=0}^d \hspl^r_i(\Delta),
\end{equation}
where the isomorphism is as $\R$-vector spaces.  We refer to the first isomorphism in~\eqref{eq:StarVertexHomogDecomp} as the \textit{graded structure} of $\spl^r(\Delta)$.  If $\Delta$ is not the star of a vertex, then~\eqref{eq:StarVertexHomogDecomp} does not hold for $\spl^r(\Delta)$; we summarize a coning construction of Billera and Rose under which~\eqref{eq:StarVertexHomogDecomp} will still be valid.

\begin{construction}\label{cons:coning}
	Let $\R^n$ have coordinates $x_1,\ldots,x_n$, $\R^{n+1}$ have coordinates $x_0,\ldots,x_n$, and define $\phi:\R^n\to\R^{n+1}$ by $\phi(x_1,\ldots,x_n)=(1,x_1,\ldots,x_n)$.  If $\sigma$ is a simplex in $\R^n$, the \textit{cone over} $\sigma$, denoted $\wh{\sigma}$, is the simplex in $\R^{n+1}$ which is the convex hull of the origin in $\R^{n+1}$ and $\phi(\sigma)$.  If $\Delta\subset\R^n$ is a simplicial complex, the \textit{cone over} $\Delta$, denoted $\wDelta$, is the simplicial complex consisting of the simplices $\bigl\{\wh{\beta}:\beta\in\Delta\bigr\}$ along with the origin in $\R^{n+1}$, which is called the \textit{cone vertex}.  We denote the polynomial ring $\R[x_0,x_1,\ldots,x_n]$ associated to $\wDelta$ by $\wS$.
\end{construction}
For any simplicial complex $\Delta\subset\R^n$, the simplicial complex $\wDelta\subset\R^{n+1}$ is an (open) vertex star of the cone vertex.  Thus~\eqref{eq:StarVertexHomogDecomp} yields
$
\spl^r(\wDelta)\cong \bigoplus\limits_{i\ge 0} \hspl^r_i(\wDelta)  \mbox{ and } \spl^r_d(\wDelta)\cong \bigoplus\limits_{i=0}^d \hspl^r_i(\wDelta).
$
Moreover, Billera and Rose show that
\begin{theorem}{\cite[Theorem~2.6]{DimSeries}}\label{thm:Homogenize}
	$\spl^r_d(\Delta)\cong \hspl^r_d(\wDelta)$.
\end{theorem}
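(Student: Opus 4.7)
The plan is to construct an explicit $\R$-linear bijection $\Phi\colon \spl^r_d(\Delta)\to \hspl^r_d(\wDelta)$ by homogenizing piecewise and to check that $\Phi$ both preserves and reflects the $C^r$-gluing conditions. Given $F\in\spl^r_d(\Delta)$ with pieces $F|_\iota=p_\iota\in\tS_{\le d}$ for $\iota\in\Delta_n$, I would define $\wh{F}$ on $\wDelta$ by $\wh{F}|_{\wh{\iota}}=\wh{p_\iota}$, where
\[
\wh{p}(x_0,x_1,\ldots,x_n):=x_0^d\,p\!\left(\frac{x_1}{x_0},\ldots,\frac{x_n}{x_0}\right)\in\wS_d
\]
is the standard degree-$d$ homogenization. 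The candidate inverse $\Psi$ sends $G\in\hspl^r_d(\wDelta)$ to the piecewise polynomial on $\Delta$ whose restriction to $\iota$ is $G|_{\wh{\iota}}(1,x_1,\ldots,x_n)$. On each top simplex these are mutually inverse $\R$-linear isomorphisms $\tS_{\le d}\cong \wS_d$ (dimensions on both sides equal $\binom{d+n}{n}$), so what remains is to verify that $\Phi$ lands in $\hspl^r_d(\wDelta)$ and $\Psi$ lands in $\spl^r_d(\Delta)$.

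The core step is to match $C^r$-continuity constraints across facets. I would invoke the algebraic criterion of Billera~\cite{Homology}: two polynomials $p_1,p_2$ glue with order-$r$ smoothness across an interior codimension-one face $\sigma=\iota_1\cap\iota_2$ of $\Delta$ if and only if $\ell_\sigma^{r+1}$ divides $p_1-p_2$ in $\tS$, where $\ell_\sigma$ is any affine linear form vanishing on $\sigma$. The analogous criterion applies on $\wDelta$ at each interior $n$-face, and every interior $n$-face of $\wDelta$ is of the form $\wh{\sigma}$ for a unique $\sigma\in\Delta^\circ_{n-1}$ (the remaining boundary facets $\phi(\iota)$ and $\wh{\sigma}$ with $\sigma$ on $\partial\Delta$ contribute no smoothness constraints). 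On $\wDelta$ one therefore needs $L_{\wh{\sigma}}^{r+1}$ to divide $\wh{p_{\iota_1}}-\wh{p_{\iota_2}}$ in $\wS$, where $L_{\wh{\sigma}}\in\wS_1$ is a linear form vanishing on the linear span of $\wh{\sigma}$.

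The key geometric observation that makes the two criteria correspond is that $L_{\wh{\sigma}}$ is, up to scalar, precisely the degree-$1$ homogenization of $\ell_\sigma$: if $\ell_\sigma(x_1,\ldots,x_n)=a_0+a_1x_1+\cdots+a_nx_n$, then since $\wh{\sigma}$ is the linear span of the vectors $\{(1,v)\colon v\in\sigma\}$, one checks directly that $L_{\wh{\sigma}}=a_0x_0+a_1x_1+\cdots+a_nx_n$ vanishes on $\wh{\sigma}$. With this identification, divisibility transfers in both directions: if $p_{\iota_1}-p_{\iota_2}=\ell_\sigma^{r+1}h$ with $\deg h\le d-r-1$, then homogenizing the identity to total degree $d$ yields $\wh{p_{\iota_1}}-\wh{p_{\iota_2}}=L_{\wh{\sigma}}^{r+1}\,\wh{h}$ in $\wS_d$, where $\wh{h}$ is taken to be homogeneous of degree $d-r-1$; conversely, setting $x_0=1$ in any identity $\wh{p_{\iota_1}}-\wh{p_{\iota_2}}=L_{\wh{\sigma}}^{r+1}\,H$ recovers $\ell_\sigma^{r+1}\mid p_{\iota_1}-p_{\iota_2}$.

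Assembling the local correspondences, $\Phi$ and $\Psi$ restrict to mutually inverse bijections between the subspaces cut out by the $C^r$ divisibility conditions, giving $\spl^r_d(\Delta)\cong \hspl^r_d(\wDelta)$. I expect the only delicate step to be the bookkeeping of homogenization degrees in the divisibility transfer — in particular, verifying that the quotient $\wh{h}$ can be taken homogeneous of degree exactly $d-r-1$ so that $L_{\wh{\sigma}}^{r+1}\wh{h}$ is homogeneous of degree $d$; everything else is formal once the Billera criterion is applied on each side.
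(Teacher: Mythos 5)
Your argument is correct, and it is essentially the standard proof of this result: the paper does not prove Theorem~\ref{thm:Homogenize} itself but cites it from Billera--Rose, whose argument is exactly this homogenization of each piece to degree $d$ combined with the divisibility criterion of Proposition~\ref{prop:AlgebraicCriterion} applied to $\Delta$ and to $\wDelta$, using that the interior codimension-one faces of $\wDelta$ are precisely the cones $\wh{\sigma}$ over $\sigma\in\Delta^\circ_{n-1}$ and that $L_{\wh{\sigma}}$ is the homogenization of $\ell_\sigma$. The degree bookkeeping you flag does close up, since $\deg h=\deg(p_{\iota_1}-p_{\iota_2})-(r+1)\le d-r-1$ whenever $\ell_\sigma^{r+1}$ divides $p_{\iota_1}-p_{\iota_2}$ in the domain $\tS$, so the quotient homogenizes to degree exactly $d-r-1$.
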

Thus the study of spline spaces reduces to the study of \textit{homogeneous} spline spaces.

\begin{definition}\label{def:IdealGenPowersOfLinearForms}
	A subset $\tI\subset \tS$ is called an \textit{ideal} if, for every $f,g\in \tI$ and $h\in \tS$, $f+g\in\tI$ and $hf\in\tI$.  If $f_1,\ldots,f_k\in \tS$ are polynomials, we write $\langle f_i \rangle$ for the vector space of all polynomial multiples of $f_i$ ($i=1,\ldots,k$) and $\langle f_1,\ldots,f_k\rangle:=\sum_{i=1}^k \langle f_i\rangle$.  This is called the \textit{ideal generated by} $f_1,\ldots,f_k$.  We typically only use its vector space structure.
\end{definition}

\begin{definition}\label{def:IdealsOfFaces}
	Suppose $\Delta\subset\R^n$ is an $n$-dimensional simplicial complex. 
	If $\beta\in\Delta_n$ we define $\tJ(\beta)=0$. 
	If $\sigma\in\Delta_{n-1}$, let $\ell_\sigma$ be a choice of linear form vanishing on $\sigma$.  We define $\tJ(\sigma)=\langle \ell_\sigma^{r+1}\rangle$.  For any face $\beta\in\Delta_i$ where $i<n$ we define
	\[
	\tJ(\beta):=\sum_{\sigma\supseteq\beta} \tJ(\sigma)=\langle \ell^{r+1}_\sigma: \beta\subseteq \sigma\rangle.
	\]
\end{definition}
Billera and Rose show that if $\Delta$ is \textit{hereditary} (a hypothesis which is implied by ours) then

\begin{proposition}{\cite[Proposition~1.2]{DimSeries}}\label{prop:AlgebraicCriterion}
	$F\in\spl^r(\Delta)$ if and only if 
	\[
	F|_{\iota}-F|_{\iota'}\in \tJ(\sigma) \mbox{ \ for every }\iota, \iota'\in \Delta_n \mbox{\  satisfying } \iota\cap \iota'=\sigma\in\Delta_{n-1}.
	\]
\end{proposition}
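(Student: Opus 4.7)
The plan is to reduce this algebraic smoothness criterion to the classical single-hyperplane lemma and then use the hereditary property of $\Delta$ to propagate smoothness from the interior of each codimension-one face to all of $\Delta$. The key lemma to establish first is: for polynomials $p,q\in \tS$ and a linear form $\ell$ defining a hyperplane $H\subset\R^n$, the piecewise function that equals $p$ on one closed half-space $H^+$ and $q$ on the other closed half-space $H^-$ lies in $C^r(H^+\cup H^-)$ if and only if $\ell^{r+1}$ divides $p-q$. Granting this lemma, both directions of the proposition follow by taking $p=F|_\iota$, $q=F|_{\iota'}$ and $\ell=\ell_\sigma$ for each interior $(n-1)$-face $\sigma=\iota\cap\iota'$, since by definition $\tJ(\sigma)=\langle \ell_\sigma^{r+1}\rangle$.

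To prove the key lemma, I would argue the forward direction by induction on $r$. If the piecewise function is $C^r$ across $H$, then the polynomial $p-q$ together with all of its partial derivatives of orders up to $r$ vanish identically on $H$. Since a polynomial vanishes on $H$ exactly when it is divisible by $\ell$, we get $p-q=\ell\cdot h$ for some $h\in \tS$; applying the product rule to the partial derivatives of $\ell h$ of order $\le r$ and evaluating on $H$ shows that $h$ and its partials up to order $r-1$ vanish on $H$, and the induction yields $\ell^r\mid h$, hence $\ell^{r+1}\mid(p-q)$. The reverse direction is immediate: if $p-q=\ell^{r+1}g$, then every partial derivative of $p-q$ of order at most $r$ is a polynomial multiple of $\ell$ (by repeated application of the product rule) and therefore vanishes on $H$, which is precisely the statement that the partial derivatives of $p$ and $q$ of orders $\le r$ agree along $H$, i.e.\ $C^r$ smoothness across $H$.

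Applying the lemma facet-by-facet, a piecewise polynomial function $F$ is $C^r$ on an open neighborhood of the relative interior of every $\sigma\in\Delta^\circ_{n-1}$ if and only if $F|_\iota-F|_{\iota'}\in\tJ(\sigma)$ for all adjacent $\iota,\iota'\in\Delta_n$ with $\iota\cap\iota'=\sigma$. To upgrade this to $F\in C^r(\Delta)$, which is what $\spl^r(\Delta)$ demands, one must also check $C^r$ smoothness across the lower-dimensional interior faces (the $(n-2)$-skeleton and below). This is exactly what the hereditary hypothesis guarantees: in a hereditary complex, $C^r$ smoothness of a piecewise polynomial across the interiors of all codimension-one faces forces $C^r$ smoothness at every point, because the set of points where a piecewise polynomial fails to be $C^r$ is a closed semi-algebraic subset of the $(n-1)$-skeleton, and the hereditary condition on the face-poset of $\Delta$ prevents such a set from being supported only on lower-dimensional strata.

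The main obstacle is not the algebraic content (the lemma is a routine calculation using Taylor expansions and divisibility by $\ell$) but the topological/combinatorial step of confirming that pointwise $C^r$ smoothness across interiors of facets implies global $C^r$ smoothness. This is where one invokes (or re-proves) that the simplicial manifolds with boundary considered in this paper are hereditary in the sense of Billera and Rose; the authors signal this by noting that their standing hypothesis on $\Delta$ implies the hereditary condition, so the proposition then follows directly from \cite{DimSeries}.
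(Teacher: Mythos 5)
The paper offers no proof of this proposition: it is quoted directly from Billera and Rose \cite{DimSeries}, with the surrounding remark that the standing hypothesis on $\Delta$ (simplicial manifold with boundary) implies the hereditary condition under which their result applies. So the comparison here is with the cited source rather than with an in-paper argument. Your reduction to the single-hyperplane lemma, and your proof of that lemma by induction on $r$ using divisibility by $\ell$, are correct and are exactly the classical mechanism underlying the cited result.

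The one step whose justification does not hold up as written is the propagation from facets to lower-dimensional faces. The assertion that ``the set of points where a piecewise polynomial fails to be $C^r$ is a closed semi-algebraic subset of the $(n-1)$-skeleton, and the hereditary condition \dots prevents such a set from being supported only on lower-dimensional strata'' is not an argument: there is no general principle forbidding the bad set from concentrating on a codimension-two face. (Consider two $n$-simplices meeting only in a common $(n-2)$-face: there are no interior facets, the facet conditions are vacuous, and yet the piecewise function need not even be continuous along the shared face. This configuration is exactly what heredity excludes.) What the hereditary condition actually buys is a chain, or telescoping, argument: for $x$ in the relative interior of a face $\sigma$ of codimension at least $2$, any two maximal cells $\tau,\tau'$ containing $\sigma$ are joined by a chain $\tau=\tau_0,\tau_1,\dots,\tau_m=\tau'$ of maximal cells containing $\sigma$ in which consecutive cells share an interior facet containing $\sigma$. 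Then $F|_{\tau'}-F|_{\tau}=\sum_{i}\bigl(F|_{\tau_{i+1}}-F|_{\tau_i}\bigr)$ is a sum of terms $\ell_i^{r+1}g_i$ with each $\ell_i$ vanishing on $\sigma$, so every partial derivative of $F$ of order at most $r$, computed cell by cell, agrees on $\sigma$ and patches to a continuous function near $x$; this is what yields $C^r$ at $x$. With that replacement your proof is complete and coincides with the argument in the cited reference.
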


\subsection{Chain Complexes}\label{ss:ChainComplexes}
If $C_0,\ldots,C_k$ are vector spaces and $\partial_i:C_i\to C_{i-1}$ ($i=1,\ldots,k$) are linear maps satisfying $\partial_{i-1}\circ\partial_i=0$ (for $i=2,\ldots,k$), then the collection of this data is called a \textit{chain complex}; this is typically recorded as
\[
\mathcal{C}: 0\rightarrow C_k \xrightarrow{\partial_k} C_{k-1}\xrightarrow{\partial_{n-1}}\cdots\xrightarrow{\partial_1} C_0\rightarrow 0.
\]
We call the subscript $i$ of $C_i$ the \textit{homological index} and refer to $C_i$ as the vector space of $\mathcal{C}$ in homological index $i$.  The \textit{homologies} of the chain complex are the quotient vector spaces $H_i(\mathcal{C})=\ker(\partial_{i-1})/\im(\partial_i)$ for $i=0,\ldots,k$. (We put $H_0(\mathcal{C})=C_0/\im(\partial_1)$ and $H_k(\mathcal{C})=\ker(\partial_k)$.)  Often $H_*(\mathcal{C})$ is used to denote the entire set of homology groups $H_0(\mathcal{C}),\ldots,H_k(\mathcal{C})$.  We are primarily concerned with a topological construction of chain complexes; see~\cite[Chapter~2]{Hatcher} for a standard reference.

We now define the chain complex introduced by Billera~\cite{Homology} and refined by Schenck and Stillman~\cite{LCoho}.
Let $\tS^{\Delta_i}$ ($i=0,\ldots,n$) denote the direct sum $\bigoplus_{\beta\in \Delta_i} \tS[\beta]$, where $[\beta]$ is a formal basis symbol corresponding to the $i$-face $\beta$.  Fix an ordering $\gamma_1,\ldots,\gamma_{f_0}$ of the vertices of $\Delta$.  Each $i$-face $\beta\in\Delta_i$ can be represented as an \textit{ordered} list $\beta=(\gamma_{j_0},\ldots,\gamma_{j_i})$ of $i+1$ vertices.  We define the \textit{simplicial boundary map} $\partial_i$ (for $i=1,2,3$) on the formal symbol $[\beta]=[\gamma_{j_0},\ldots,\gamma_{j_i}]$ by
$ 
\partial_i\bigl([\beta]\bigr)=\partial_i\bigl([\gamma_{j_0},\ldots,\gamma_{j_i}]\bigr)=\sum_{k=0}^i (-1)^i\bigl[\gamma_0,\ldots,\hat{\gamma}_{j_k},\ldots,\gamma_{j_i}\bigr]\ ,$
where $\hat{\gamma}_{j_k}$ means that the vertex $\gamma_{j_k}$ is omitted from the list.  We extend this map linearly to $\bigoplus_{\beta\in\Delta_i} \tS[\beta]$.  

It is straightforward to verify that $\partial_{i-1}\circ\partial_i=0$ for $i=2,\ldots,n$ (this only needs to be checked on the basis symbols $[\beta]$).  Clearly the simplicial boundary map $\partial_i$ can be restricted to a map $\partial_i: \tS^{\Delta^\circ_i}\to \tS^{\Delta^\circ_{i-1}}$ where all formal symbols corresponding to faces on the boundary of $\Delta$ are dropped.  We denote by $\calR[\Delta]$ the chain complex
\[
\calR[\Delta] \colon\quad  0\longrightarrow \tS^{\Delta_n}\xrightarrow{\partial_n} \tS^{\Delta^\circ_{n-1}} \xrightarrow{\partial_{n-1}} \cdots\xrightarrow{\partial_2} \tS^{\Delta^\circ_1} \xrightarrow{\partial_1} \tS^{\Delta^\circ_0} \longrightarrow 0\ .
\]
(This is the simplicial chain complex of $\Delta$ relative to its boundary $\partial\Delta$ with coefficients in $\tS$ -- see~\cite[Chapter~2.1]{Hatcher}).

We now put the vector spaces $\tJ(\beta)$ together to make a sub-chain complex of $\calR[\Delta]$
\[
\calJ[\Delta]\colon \quad  0\longrightarrow \bigoplus_{\iota\in\Delta_n} \tJ(\iota)=0 \rightarrow \bigoplus_{\sigma\in\Delta^\circ_{n-1}} \tJ(\sigma)\xrightarrow{\partial_{n-1}}\cdots\xrightarrow{\partial_2} \bigoplus_{\tau\in\Delta^\circ_1} \tJ(\tau) \xrightarrow{\partial_1} \bigoplus_{\gamma\in\Delta^\circ_0} \tJ(\gamma) \longrightarrow 0\ .
\]
The \textit{Billera-Schenck-Stillman chain complex} is the quotient of $\calR[\Delta]$ by $\calJ[\Delta]$, namely
\[
\calR/\calJ[\Delta] \colon \quad 0\longrightarrow \bigoplus_{\iota\in\Delta_{n}}  \tS\xrightarrow{\overline{\partial}_n} \bigoplus_{\sigma\in\Delta_{n-1}^\circ} \frac{\tS}{\tJ(\sigma)}\xrightarrow{\overline{\partial}_{n-1}} \cdots\xrightarrow{\overline{\partial}_2} \bigoplus_{\tau\in\Delta_1^\circ}\frac{\tS}{\tJ(\tau)}\xrightarrow{\overline{\partial}_1} \bigoplus_{\gamma\in\Delta_0^\circ}\frac{\tS}{\tJ(\gamma)}\longrightarrow 0\ .
\]
\begin{remark}
	If the simplicial complex $\Delta$ is fixed, we simply write $\calJ,\calR,$ and $\calR/\calJ$ for the chain complexes $\calJ[\Delta],\calR[\Delta]$, and $\calR/\calJ[\Delta]$, respectively.
\end{remark}

\begin{notation}\label{not:ConingAbuse}	
	We introduce a natural abuse of notation regarding the coning construction~\ref{cons:coning}.  If $\Delta$ is a simplicial complex and $\wDelta$ is the cone over $\Delta$, then $\wDelta$ is an open vertex star.  Hence there is no interior vertex of $\wDelta$ and thus the vector space of homological index $0$ in $\calJ[\wDelta],\calR[\wDelta],$ and $\calR/\calJ[\wDelta]$ is just zero.  We thus decrease the homological index by one of each of the vector spaces in $\calJ[\wDelta],\calR[\wDelta],$ and $\calR/\calJ[\wDelta]$.  Hence if $\Delta\subset\R^n$ and thus $\wDelta\subset\R^{n+1}$, $H_n(\calR/\calJ[\wDelta])$ is the top homology of the chain complex $\calR/\calJ[\wDelta]$, not $H_{n+1}(\calR/\calJ[\wDelta])$ (and likewise for lower indices).  Thus the vector space in homological index $i$ ($0\le i\le n$) in $\calR/\calJ[\wDelta]$ corresponds to the homological index $i$ in $\calR/\calJ[\Delta]$, so its summands are indexed by $\Delta^\circ_i$.
\end{notation}

The crucial observation of Billera is that $H_n(\calR/\calJ[\Delta])\cong \spl^r(\Delta)$; this follows from the criterion of Proposition~\ref{prop:AlgebraicCriterion}. 

\subsection{Graded structure}\label{ss:GradedStructure}
The vector space $\tJ(\beta)$ is infinite-dimensional for each face $\beta\in\Delta$ which is not a tetrahedron.  Thus the constituents of the chain complexes $\calJ[\Delta],\calR[\Delta],$ and $\calR/\calJ[\Delta]$ are also infinite-dimensional.  In order to get a chain complex of finite dimensional vector spaces to relate to the fundamental spaces of interest ($\spl^r_d(\Delta)$ and $\hspl^r_d(\Delta)$), we make use of a \textit{graded structure}.

\begin{definition}\label{def:GradedStructure}
	Let $V$ be a real vector space and suppose $V_i$ is a finite-dimensional vector subspace of $V$ for every integer $i\ge 0$.  If $V\cong \bigoplus_{i\ge 0} V_i$, then we refer to this isomorphism as a \textit{graded structure} of $V$ and we call $V$ a \textit{graded} vector space.  In particular, if $\tJ\subset \tS$ is an ideal (c.f. Definition~\ref{def:IdealGenPowersOfLinearForms}), then we write $\tJ_d$ for the vector space of homogeneous polynomials of degree $d$ in $\tJ$.  If $\tJ\cong\bigoplus_{d\ge 0} \tJ_d$ then we call $\tJ$ a \textit{graded ideal} of $\tS$.
\end{definition}

\begin{definition}\label{def:GradedChainComplex}
	If $\mathcal{C}: 0\rightarrow C_n\xrightarrow{\partial_n}\cdots\xrightarrow{\partial_1} C_0 \rightarrow 0$ is a chain complex of vector spaces so that
	\begin{enumerate}
		\item The vector space $C_j$ has a graded structure $C_j\cong \bigoplus_{i\ge 0} (C_j)_i$ for $j=0,\ldots,n$ and
		\item The map $\partial_j:C_j\to C_{j-1}$ satisfies $\partial_j((C_j)_i)\subset (C_{j-1})_i$ for $j=1,\ldots,n$\ ,
	\end{enumerate}
	then $\mathcal{C}_d:=0\rightarrow (C_n)_d\xrightarrow{\partial_n} (C_{n-1})_d \xrightarrow{\partial_{n-1}}\cdots \xrightarrow{\partial_1} (C_0)_d \rightarrow 0$ is a chain complex which we call the \textit{degree $d$ strand} of $\mathcal{C}$.
	In this case we say $\mathcal{C}$ is \textit{graded} with \textit{graded structure} $\mathcal{C}\cong \bigoplus_{d\ge 0} \mathcal{C}_d$.
	
	If a chain complex $\mathcal{C}$ has a graded structure $\mathcal{C}\cong \bigoplus_{d\ge 0}\mathcal{C}_d$, it is straightforward to see that the homologies of $\mathcal{C}$ also have the graded structure $H_i(\mathcal{C})\cong \bigoplus_{d\ge 0} H_i(\mathcal{C})_d$, where $H_i(\mathcal{C})_d:= H_i(\mathcal{C}_d)$ is the $i$th homology of the degree $d$ strand.
\end{definition}

\begin{remark}
	The isomorphisms~\eqref{eq:StarVertexHomogDecomp} show that $\spl^r(\Delta)$ has a graded structure if $\Delta$ is the star of a vertex.
\end{remark}

If $\Delta$ is a vertex star of $\gamma$ (assumed to be the origin) and $\gamma\in\beta$, then the linear forms whose powers generate $\tJ(\beta)$ have no constant term and $\tJ(\beta)$ is a graded ideal.  It is straightforward to see that the simplicial boundary map respects this graded structure (i.e. property (2) of Definition~\ref{def:GradedChainComplex} is satisfied), so if $\Delta$ is a vertex star then the chain complexes $\calJ[\Delta],\calR[\Delta],$ and $\calR/\calJ[\Delta]$ also have a graded structure, along with their homologies.  In particular, $\hspl^r_d(\Delta)\cong H_n\bigl(\calR/\calJ[\Delta]\bigr)_d$ if $\Delta\subset\R^n$ is a vertex star.  If $\Delta$ is not necessarily a vertex star, we can take advantage of the coning construction $\Delta\to\wh{\Delta}$ to obtain a graded structure.  Keeping in mind Theorem~\ref{thm:Homogenize} and Notation~\ref{not:ConingAbuse}, we have $\spl^r_d(\Delta)\cong \hspl^r_d(\wDelta)\cong \ker(\overline{\partial}_n)_d\cong H_n\bigl(\calR/\calJ[\wDelta]\bigr)_d$.

\subsection{Euler characteristic and dimension formulas}
If $\mathcal{C}\colon 0\rightarrow C_n\rightarrow C_{n-1} \rightarrow \cdots \rightarrow C_0\rightarrow 0$ is a chain complex with a graded structure, we write $\chi(\mathcal{C},d)=\sum_{i=0}^n (-1)^{n-i}\dim (C_i)_d$.  This is the \textit{Euler-Poincar\'e characteristic} of $\mathcal{C}_d$.  The rank-nullity theorem yields:
\begin{equation}\label{eq:EulerCharacteristic}
\chi(\mathcal{C},d)=\sum_{i=0}^n (-1)^{n-i} \dim H_i(\mathcal{C})_d.
\end{equation}
The three chain complexes $\calJ,\calR,$ and $\calR/\calJ$ fit into the short exact sequence of chain complexes $0\rightarrow \calJ\rightarrow \calR\rightarrow \calR/\calJ \rightarrow 0$.  Correspondingly there is the long exact sequence:
\[
0\rightarrow H_n(\calJ)\rightarrow H_n(\calR) \rightarrow \cdots \rightarrow H_1(\calR/\calJ)\rightarrow H_0(\calJ)\rightarrow H_0(\calR)\rightarrow H_0(\calR/\calJ)\rightarrow 0.
\]
The short exact sequence $0\rightarrow \calJ\rightarrow \calR\rightarrow \calR/\calJ\rightarrow 0$ also yields
\begin{equation}\label{eq:EulerCharESCC}
\chi(\calR/\calJ,d)=\chi(\calR,d)+\chi(\calJ,d)\ .
\end{equation}
There is a sum instead of a difference on the right hand side of Equation~\eqref{eq:EulerCharESCC} because the first non-zero term in the chain complex $\calJ$ has homological degree $n-1$ instead of $n$.

\begin{proposition}\label{prop:FrequentlyUsedIsomorphisms}
	For an $n$-dimensional simplicial complex $\Delta$ in $\R^n$, $H_n(\calR/\calJ[\wDelta])_d\cong \spl^r_d(\Delta)$ and $H_0(\calR/\calJ[\Delta])=0$.  If $\Delta$ is a vertex star, $H_n(\calR/\calJ[\Delta])_d\cong \hspl^r_d(\Delta)$.  If $\Delta$ is connected, then $H_0(\calR/\calJ[\Delta])=0$.  If $\Delta$ is a vertex star whose link is homeomorphic to an $(n-1)$-sphere or an $(n-1)$-disk, then $\spl^r(\Delta)\cong H_n(\calR/\calJ)\cong \tS\oplus H_{n-1}(\calJ)$ and $H_i(\calR/\calJ)\cong H_{i-1}(\calJ)$ for $i=1,\ldots,n-1$.
\end{proposition}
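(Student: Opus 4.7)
My plan is to combine the explicit description of $\ker(\overline{\partial}_n)$ with the fact (noted in the text) that $\calR[\Delta]$ is the simplicial chain complex of the pair $(\Delta,\partial\Delta)$ with coefficients in $\tS$, whose homology can thus be computed topologically. The long exact sequence attached to $0\to\calJ\to\calR\to\calR/\calJ\to 0$ is then the engine driving everything.

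For the spline identifications, I would start from the degree $n$ piece of $\calR/\calJ$ and unwind. An element of $\ker(\overline{\partial}_n)\subset\bigoplus_{\iota\in\Delta_n}\tS$ is a tuple $\{F_\iota\}$ with $F_\iota-F_{\iota'}\in\tJ(\sigma)$ whenever $\sigma=\iota\cap\iota'\in\Delta_{n-1}^\circ$. By Proposition~\ref{prop:AlgebraicCriterion} this is exactly the condition to define a spline, so $H_n(\calR/\calJ[\Delta])\cong\spl^r(\Delta)$. If $\Delta$ is a vertex star, every object in sight is graded and the identification refines in each degree to $H_n(\calR/\calJ[\Delta])_d\cong\hspl^r_d(\Delta)$. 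For general $\Delta\subset\R^n$, apply the vertex-star case to $\wDelta$ (using the index shift of Notation~\ref{not:ConingAbuse}) and combine with Theorem~\ref{thm:Homogenize} to obtain $H_n(\calR/\calJ[\wDelta])_d\cong\hspl^r_d(\wDelta)\cong\spl^r_d(\Delta)$.

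For the vanishing of $H_0(\calR/\calJ[\Delta])$ under the stated connectivity hypothesis, the tail of the long exact sequence gives a surjection $H_0(\calR)\twoheadrightarrow H_0(\calR/\calJ)$, so it suffices to show $H_0(\calR)=0$. But $H_0(\calR)=H_0(\Delta,\partial\Delta;\tS)$, and for a connected simplicial $n$-manifold with non-empty boundary this relative group vanishes.

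The vertex-star identification with $\tS\oplus H_{n-1}(\calJ)$ is where the topology really does work. The key geometric observation is that in both the closed case (link an $(n-1)$-sphere) and the open case (link an $(n-1)$-disk) the vertex star $\Delta$ is homeomorphic to an $n$-disk whose boundary is an $(n-1)$-sphere; in the open case one must notice that $\partial\Delta$ is the union of the link itself with the cone over the boundary of the link, two $(n-1)$-disks glued along their common boundary, hence a sphere. Consequently $H_i(\calR)=H_i(D^n,S^{n-1};\tS)$ equals $\tS$ for $i=n$ and $0$ otherwise. Combining this with the fact that $\calJ$ is zero in homological index $n$ (so $H_n(\calJ)=0$), the long exact sequence yields $H_i(\calR/\calJ)\cong H_{i-1}(\calJ)$ for $1\le i\le n-1$, and a short exact sequence
\[
0\to\tS\to H_n(\calR/\calJ)\to H_{n-1}(\calJ)\to 0.
\]
This sequence splits because the natural inclusion $\tS\hookrightarrow\spl^r(\Delta)\cong H_n(\calR/\calJ)$ sending a polynomial to the corresponding globally polynomial spline provides a section, giving $H_n(\calR/\calJ)\cong\tS\oplus H_{n-1}(\calJ)$. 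The main obstacle I expect is the topological identification of $\partial\Delta$ with $S^{n-1}$ in the open-star case; once that is in hand, the long exact sequence arguments and the splitting are essentially mechanical.
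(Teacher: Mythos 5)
Your argument follows the paper's proof essentially verbatim: identify $H_n(\calR/\calJ)$ with splines via Proposition~\ref{prop:AlgebraicCriterion} and Theorem~\ref{thm:Homogenize}, compute $H_*(\calR)$ as the relative homology of $(\Delta,\partial\Delta)$ with coefficients in $\tS$ (which for a vertex star with sphere or disk link is the homology of $(D^n,S^{n-1})$, i.e.\ $\tS$ concentrated in index $n$), and run the long exact sequence of $0\to\calJ\to\calR\to\calR/\calJ\to 0$. The only quibble is terminological: the inclusion of global polynomials is the first map of the short exact sequence, not a section of it, so it cannot by itself force a splitting --- the splitting comes instead from the retraction $H_n(\calR/\calJ)\to\tS$ given by restricting a spline to a fixed $n$-simplex, or simply from the fact that a short exact sequence of graded vector spaces always splits degree by degree.
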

\begin{proof}
	By Theorem~\ref{thm:Homogenize} and Proposition~\ref{prop:AlgebraicCriterion}, $\spl^r_d(\Delta) \cong \hspl^r_d(\wDelta) \cong H_n(\calR/\calJ[\Delta])_d$.  Since every vertex can be connected to the boundary of $\Delta$ by a path consisting of interior edges, $\partial_1:\tS^{f^\circ_1}\to\tS^{f^\circ_0}$ is surjective and thus $H_0(\calR[\Delta])=0$, hence $H_0(\calR/\calJ[\Delta])=0$ by the long exact sequence associated to $0\to \calJ\to\calR\to \calR/\calJ\to 0$\ .
	
	The hypothesis that $\Delta$ is a vertex star whose link is homeomorphic to an $(n-1)$-sphere or an $(n-1)$-disk implies that $H_i(\calR[\Delta])=0$ for $0\le i<n$ and $H_n(\calR[\Delta])\cong \tS$ (by excision~\cite[Proposition~2.22]{Hatcher}, the homology of $\Delta$ relative to its boundary coincides with the homology of the $n$-sphere, which gives the claimed homologies).  Then the last result follows from the long exact sequence associated to $0\rightarrow\calJ\rightarrow\calR\rightarrow\calR/\calJ\rightarrow 0$.
\end{proof}

\begin{remark}
	If $\Delta$ is homeomorphic to an $n$-disk, then the copy of $\tS$ in $\spl^r(\Delta)\cong \tS\oplus H_{n-1}(\calJ)$ corresponds to the globally polynomial splines, while the the so-called \textit{smoothing cofactors} are encoded by the map
	\[
	\bigoplus_{\sigma\in\Delta^\circ_{n-1}} \tJ(\sigma) \xrightarrow{\partial_{n-1}} \bigoplus_{\tau\in\Delta^\circ_{n-2}} \tJ(\beta)\ .
	\]
\end{remark}
\begin{proposition}\label{prop:EulerCharacteristicAndDimension}
	If $\Delta$ is a \mesh{} then
	\begin{multline}\label{eq:dimformula}
		\dim \spl^r_d(\Delta)=\bigl(f_3-f^\circ_2+f^\circ_1-f^\circ_0\bigr)\dim\wS_d+\chi\bigl(\calJ[\wDelta],d\bigr)\\ +\dim H_2\bigl(\calR/\calJ[\wDelta]\bigr)_d- \dim H_1\bigl(\calR/\calJ[\wDelta]\bigr)_d.
	\end{multline}
	If $\Delta$ is a tetrahedral vertex star whose link is homeomorphic to a $2$-sphere or a $2$-disk then
	\begin{equation}\label{eq:celldimformula}
	\dim \hspl^r_d(\Delta)=\dim \tS_d+\chi(\calJ[\Delta],d)+\dim H_1(\calJ[\Delta])_d.
	\end{equation}
\end{proposition}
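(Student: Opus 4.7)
The plan is to derive both formulas by applying the Euler--Poincar\'e identity~\eqref{eq:EulerCharacteristic} to suitable chain complexes in degree $d$ and combining the result with the vanishing statements of Proposition~\ref{prop:FrequentlyUsedIsomorphisms} and the short exact sequence relation~\eqref{eq:EulerCharESCC}.

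For the first formula I begin with $\dim \spl^r_d(\Delta) = \dim H_3(\calR/\calJ[\wDelta])_d$, provided by Proposition~\ref{prop:FrequentlyUsedIsomorphisms}. Since $\wDelta$ is a cone, hence connected, the same proposition gives $H_0(\calR/\calJ[\wDelta])=0$. Applying the Euler--Poincar\'e identity to the degree $d$ strand of $\calR/\calJ[\wDelta]$ and solving for the top homology yields
\[
\dim H_3(\calR/\calJ[\wDelta])_d = \chi(\calR/\calJ[\wDelta],d) + \dim H_2(\calR/\calJ[\wDelta])_d - \dim H_1(\calR/\calJ[\wDelta])_d.
\]
I then substitute $\chi(\calR/\calJ[\wDelta],d) = \chi(\calR[\wDelta],d) + \chi(\calJ[\wDelta],d)$ from~\eqref{eq:EulerCharESCC}, and compute $\chi(\calR[\wDelta],d)$ directly: by Notation~\ref{not:ConingAbuse}, the homological positions $0,1,2,3$ of $\calR[\wDelta]$ are indexed by $\Delta^\circ_0, \Delta^\circ_1, \Delta^\circ_2,$ and $\Delta_3$ respectively, each summand contributing a copy of $\wS$, so $\chi(\calR[\wDelta],d) = (f_3-f^\circ_2+f^\circ_1-f^\circ_0)\dim\wS_d$. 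This gives~\eqref{eq:dimformula}.

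For the second formula, the hypothesis on the link lets me invoke the last part of Proposition~\ref{prop:FrequentlyUsedIsomorphisms}, which yields $\dim \hspl^r_d(\Delta) = \dim \tS_d + \dim H_2(\calJ[\Delta])_d$. I apply the Euler--Poincar\'e identity to the degree $d$ strand of $\calJ[\Delta]$, whose topmost nonzero homological position is $n-1=2$ (the sign convention explained after~\eqref{eq:EulerCharESCC}), obtaining
\[
\chi(\calJ[\Delta],d) = \dim H_2(\calJ[\Delta])_d - \dim H_1(\calJ[\Delta])_d + \dim H_0(\calJ[\Delta])_d.
\]
Formula~\eqref{eq:celldimformula} then follows once I establish $H_0(\calJ[\Delta])=0$.

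The remaining verification, which I expect to be the main obstacle, is precisely the vanishing $H_0(\calJ[\Delta])=0$ for a vertex star. For an open vertex star the central vertex lies on the boundary and there are no interior vertices of $\Delta$ at all, so the target of $\partial_1$ in $\calJ[\Delta]$ is already zero. For a closed vertex star $\Delta_\gamma$ the unique interior vertex is $\gamma$, and I need to show that $\partial_1\colon \bigoplus_{\tau\in\Delta^\circ_1} \tJ(\tau) \to \tJ(\gamma)$ is surjective in every degree. The argument is that each generator $\ell_\sigma^{r+1}$ of $\tJ(\gamma)=\langle \ell_\sigma^{r+1}: \gamma\in\sigma\in\Delta_2\rangle$ is, up to sign, the image of $\ell_\sigma^{r+1}\in \tJ(\tau)\subseteq \tJ(\gamma)$ for any edge $\tau=[\gamma,v]\subset \sigma$; such an edge is interior because $\gamma$ is the interior center of the closed star, so only its other endpoint $v$ sits on the link and gets dropped by the relative boundary map. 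This topological point about vertex stars is the only step beyond formal Euler-characteristic bookkeeping.
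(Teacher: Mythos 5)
Your proposal is correct and follows essentially the same route as the paper: identify $\spl^r_d(\Delta)$ (resp.\ $\hspl^r_d(\Delta)$) with the top homology of $\calR/\calJ[\wDelta]$ (resp.\ with $\tS_d\oplus H_2(\calJ[\Delta])_d$) via Proposition~\ref{prop:FrequentlyUsedIsomorphisms}, apply the Euler--Poincar\'e identity~\eqref{eq:EulerCharacteristic} together with~\eqref{eq:EulerCharESCC}, and use the vanishing of the relevant $H_0$. Your explicit surjectivity argument for $\partial_1$ on a closed vertex star correctly fills in the step the paper dismisses as ``straightforward to show that $H_0(\calJ[\Delta])=0$.''
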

\begin{proof}
	First we make use of the identifications $\spl^r_d(\Delta)\cong \hspl^r_d(\wDelta)$ and $H_3(\calR/\calJ[\wDelta])_d\cong \hspl^r_d(\wDelta)$ of Theorems~\ref{thm:Homogenize} and Proposition~\ref{prop:FrequentlyUsedIsomorphisms} (using Notation~\ref{not:ConingAbuse} for the second isomorphism).  The identity~\eqref{eq:EulerCharacteristic} applied to the Euler-Poincar\'e characteristic of $\calR/\calJ[\wDelta]$, coupled with Proposition~\ref{prop:FrequentlyUsedIsomorphisms}, gives
	\[
	\dim \spl_d^r(\Delta) =  \chi(\calR/\calJ[\wDelta],d) +\dim H_2(\calR/\calJ[\wDelta])_d-\dim H_1(\calR/\calJ[\wDelta])_d.
	\]
	To get Equation~\eqref{eq:dimformula}, note that $\calR$ has the form 
	$
	0\rightarrow \tS^{f_3}\rightarrow \tS^{f^\circ_2}\rightarrow \tS^{f^\circ_1}\rightarrow \tS^{f^\circ_0}\rightarrow 0;
	$
	taking the Euler characteristic in degree $d$ and using Equation~\eqref{eq:EulerCharESCC} yields Equation~\eqref{eq:dimformula}.  For Equation~\eqref{eq:celldimformula}, Proposition~\ref{prop:FrequentlyUsedIsomorphisms} implies that 
	$
	\dim \hspl^r_d(\Delta)=\dim \tS_d + \dim H_2(\calJ[\Delta])_d.
	$
	Taking the Euler-Poincar\'e characteristic of $\calJ[\Delta]$ gives
	\[
	\dim H_2(\calJ[\Delta])_d=\chi(\calJ[\Delta],d)+\dim H_1(\calJ[\Delta])_d- \dim H_0(\calJ[\Delta])_d.
	\]
	It is straightforward to show that $H_0(\calJ[\Delta])=0$; putting together the above two equations yields Equation~\eqref{eq:celldimformula}.
\end{proof}

\subsection{Generic simplicial complexes}\label{ss:genericvertexpositions}
It is well-known that, for a fixed $r$ and $d$, there is an open set in $(\R^{n})^{f_0}$ of vertex coordinates of $\Delta$ for which $\dim \spl^r_d(\Delta)$ is constant.  

\begin{definition}\label{def:GenericSimplicialComplex}
	Suppose $\Delta$ has vertex coordinates so that $\dim \spl^r_d(\Delta)\le \dim\spl^r_d(\Delta')$ for all simplicial complexes $\Delta'$ obtained from $\Delta$ by a small perturbation of the vertex coordinates.  Then we say $\Delta$ is \textit{generic} with respect to $r$ and $d$, or simply \textit{generic} if $r$ and $d$ are understood.
\end{definition}

Hence, for the purposes of obtaining a lower bound on $\dim\spl^r_d(\Delta)$, it suffices to obtain a lower bound on $\dim\spl^r_d(\Delta)$ when $\Delta$ is generic.

\section{Proof of Theorem~\ref{thm:LBGenericTet}: a lower bound in large degree}\label{sec:general_lowerbound}
To prove Theorem~\ref{thm:LBGenericTet} we use Equation~\eqref{eq:dimformula} from Proposition~\ref{prop:EulerCharacteristicAndDimension}, so we first describe how to compute the terms which appear in $\chi(\calJ[\wDelta],d)$.  From the discussion in Section~\ref{ss:genericvertexpositions}, it suffices to consider \textit{generic} \meshplural{}.  First, the Euler characteristic of $\calJ[\wDelta]$ has the form
\begin{equation*}\label{eq:EulerCharacteristicTrivariate}
	\chi(\calJ[\wDelta],d)=\sum_{\sigma\in\Delta^\circ_2} \dim \tJ(\wsigma)_d-\sum_{\tau\in\Delta^\circ_1} \dim\tJ(\wtau)_d+\sum_{\gamma\in\Delta^\circ_0}\dim \tJ(\wgamma)_d.
\end{equation*}
If $\Delta$ is a vertex star with $\gamma$ placed at the origin, we describe the effect which coning has on the vector spaces $\tJ(\beta)$, where $\beta$ is an $i$-face of $\Delta$.  The vector spaces $\tJ(\beta)\subset \tS$ and $\tJ(\wh{\beta})\subset \wS$ are related by tensor product. Explicitly,
$
\tJ(\wh{\beta})\cong \tJ(\beta)\otimes_\R \R[x_0]
$
and
\begin{equation}\label{eq:IdealHomogenize}
\dim \tJ(\wh{\beta})_d= \sum_{i\le d} \dim \tJ(\beta)_d.
\end{equation}
Hence to compute $\dim \tJ(\wh{\beta})_d$ it is necessary and sufficient to compute $\dim \tJ(\beta)_i$ for every $0\le i\le d$.  Since these dimensions are invariant under a translation of $\R^3$, we assume $\beta$ contains the origin and thus $\tJ(\beta)$ is graded.

\begin{proposition}\label{prop:edgeIdeals}
	Suppose $\Delta\subset\R^3$ is a \mesh{}, $r\ge 0$ is an integer, and $\tau\in\Delta_1$.  With $t_\tau,a_\tau,$ and $b_\tau$ as in Notation~\ref{not:EdgeData}, we have
	\[
	\dim \tJ(\tau)_d\le t_\tau\binom{d+1-r}{2}-a_\tau\binom{d+1-q_\tau}{2}-b_\tau\binom{d+2-q_\tau}{2},
	\]
	with equality if every triangle $\sigma$ containing $\tau$ has a distinct linear span (in particular, there is equality if $\Delta$ is generic).
\end{proposition}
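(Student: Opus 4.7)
The plan is to convert the statement into a Hilbert function computation for an ideal in two variables, obtain the upper bound from semicontinuity, and obtain the equality from an explicit free resolution in the pairwise non-proportional case.

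First I would reduce to an ideal in $\R[x_1,x_2]$. After an affine change of coordinates we may assume the edge $\tau$ passes through the origin along the $x_3$-axis. Then every linear form $\ell_\sigma$ vanishing on a triangle $\sigma\supseteq\tau$ lies in the subring $\R[x_1,x_2]\subset\tS$. Setting $\tilde{K}:=\langle \ell_\sigma^{r+1}:\sigma\supseteq\tau\rangle\subset\R[x_1,x_2]$, we have $\tJ(\tau)=\tilde{K}\cdot\tS$. Because $\tS$ is a free $\R[x_1,x_2]$-module on the monomials in $x_3$, any $\R[x_1,x_2]$-free resolution of $\tilde{K}$ extends to a $\tS$-free resolution of $\tJ(\tau)$, and everything reduces to computing the Hilbert function of $\tilde{K}$.

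For the upper bound I would use semicontinuity. The dimension $\dim\tJ(\tau)_d$ equals the rank of the multiplication map $\bigoplus_{i=1}^{n_\tau}\tS_{d-r-1}\to\tS_d$, $(f_i)\mapsto\sum_i f_i\ell_{\sigma_i}^{r+1}$, whose matrix has entries polynomial in the coefficients of the $\ell_{\sigma_i}$. Rank is lower semicontinuous in these coefficients, so the dimension is maximized on a Zariski-open subset of configurations, which by the analysis below contains every configuration with pairwise non-proportional $\ell_\sigma$.

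For equality under pairwise non-proportionality (equivalently, distinct $2$-plane spans for the triangles), I would split into two cases. If $n_\tau\ge r+2$, so $t_\tau=r+2$, a Vandermonde argument shows that the $(r+1)$-st powers of any $r+2$ pairwise non-proportional elements of $\R[x_1,x_2]_1$ are linearly independent in the $(r+2)$-dimensional space $\R[x_1,x_2]_{r+1}$, hence span it. This forces $\tilde{K}=(x_1,x_2)^{r+1}$, which has the Koszul-like minimal free resolution $0\to\R[x_1,x_2](-r-2)^{r+1}\to\R[x_1,x_2](-r-1)^{r+2}\to\tilde{K}\to 0$, matching the target shape with $q_\tau=r+2$, $a_\tau=0$, $b_\tau=r+1$. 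If $n_\tau\le r+1$, so $t_\tau=n_\tau$, then Hilbert--Burch applied to the codimension-$2$ ideal $\tilde{K}$, together with the classical Hilbert-function computation for generic powers of linear forms in two variables (invoked in the companion paper \cite{PaperA} and in \cite{ANS96}), gives the minimal free resolution $0\to\R[x_1,x_2](-q_\tau-1)^{a_\tau}\oplus\R[x_1,x_2](-q_\tau)^{b_\tau}\to\R[x_1,x_2](-r-1)^{t_\tau}\to\tilde{K}\to 0$; here the numerics $a_\tau+b_\tau=t_\tau-1$ and $q_\tau(t_\tau-1)+a_\tau=t_\tau(r+1)$ are forced by the two Euler-characteristic constraints on $\R[x_1,x_2]/\tilde{K}$ being Artinian of the expected socle type.

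Extending scalars to $\tS$, each resolution yields a $\tS$-free resolution of $\tJ(\tau)$; taking the degree-$d$ strand and applying Convention~\ref{conv:BinomialCoefficients} to $\dim\tS_k=\binom{k+2}{2}$ produces the claimed formula as an equality, and the general inequality then follows from the semicontinuity argument. The hardest step I anticipate is the $n_\tau\le r+1$ subcase of the resolution: verifying that pairwise non-proportionality (rather than truly generic position) suffices to produce the Fr\"oberg-type resolution, which in two variables is classical but must be carefully extracted from the literature.
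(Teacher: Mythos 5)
Your proposal is correct, but you should know that the paper does not actually prove Proposition~\ref{prop:edgeIdeals}: its entire ``proof'' is the observation that the statement is classical (originally due to Schumaker) and, in this exact form, equivalent to \cite[Theorem~3.1]{RegSplines}. What you have written is essentially a self-contained reconstruction of the content of that cited result, and the structure is right: reduce to the ideal $\tilde{K}\subset\R[x_1,x_2]$ generated by the $(r+1)$-st powers of the forms $\ell_\sigma$ (which all lie in $\R[x_1,x_2]$ once $\tau$ is placed on the $x_3$-axis), compute its graded free resolution in the pairwise non-proportional case, extend scalars freely to $\tS$, and get the inequality for arbitrary configurations from lower semicontinuity of the rank of the multiplication map together with density of the non-proportional locus. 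Your two numerical cases check out: for $n_\tau\ge r+2$ the Vandermonde argument gives $\tilde{K}=(x_1,x_2)^{r+1}$ with its $r+2$ generators and $r+1$ linear syzygies, matching $q_\tau=r+2$, $a_\tau=0$, $b_\tau=r+1$; for $2\le n_\tau\le r+1$ the Hilbert--Burch shape with syzygies concentrated in degrees $q_\tau$ and $q_\tau+1$ reproduces the formula. The one point to be honest about is the one you already flag: Artinianness only forces $a_\tau+b_\tau=t_\tau-1$ and $(t_\tau-1)q_\tau+a_\tau=t_\tau(r+1)$ \emph{after} you know the syzygy degrees sit in two consecutive values, and that concentration is exactly the classical two-variable statement (distinct points on $\Proj^1$ impose independent conditions, so powers of pairwise non-proportional linear forms in two variables have the expected Hilbert function). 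That is precisely the content the paper outsources to \cite{RegSplines} and \cite{ANS96}, so your proof is not circular, but that step is the whole theorem and should be either cited cleanly or proved via apolarity rather than described as ``forced by Euler characteristics.'' A minor remark: $t_\tau-1$ appears in a denominator, so the formula presupposes $n_\tau\ge 2$; this is automatic for edges of a \mesh{} but worth a sentence.
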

\begin{proof}
	This is one of the fundamental computations for planar splines, originally due to Schumaker.  In its stated form, this formula is equivalent to a result of Schenck~\cite[Theorem~3.1]{RegSplines}.
\end{proof}

\begin{proposition}\cite[Corollary~3.18]{PaperA}\label{prop:vertexIdeals}
	Suppose $\Delta\subset\R^3$ is a generic closed vertex star with interior vertex $\gamma$, $r\ge 0$ is an integer, and $D_\gamma$ is the integer defined in~\eqref{eq:Dgamma}.  Then $\dim \tJ(\gamma)_d\le\binom{d+2}{2}$, with equality for $d>D_\gamma$.
\end{proposition}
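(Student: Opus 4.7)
The upper bound is immediate: after placing $\gamma$ at the origin, each $\ell_\sigma$ is a homogeneous linear form, so $\tJ(\gamma)$ is a graded ideal of $\tS=\R[x,y,z]$, and $\dim \tJ(\gamma)_d\le \dim \tS_d=\binom{d+2}{2}$. The real content is the equality $\tJ(\gamma)_d=\tS_d$ for $d>D_\gamma$, equivalently $(\tS/\tJ(\gamma))_d=0$ in that range. My plan is to translate this vanishing into a statement about trivial $C^r$ splines on a planar triangulation and then invoke Whiteley's bound.

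The linear forms $\{\ell_\sigma:\gamma\in\sigma\in\Delta_2\}$ are in bijection with the edges of the link of $\gamma$, which, since $\gamma$ is an interior vertex, is a triangulated $2$-sphere with $f_1^\circ(\Delta_\gamma)$ vertices. After removing one $2$-face of the link and projecting through $\gamma$ onto a generic plane, the spherical triangulation becomes a planar triangulation $\Lambda$ with triangular boundary whose edges are cut out by the traces of the $\ell_\sigma$. Via Macaulay's inverse system (apolarity), $\dim(\tS/\tJ(\gamma))_d$ equals the dimension of the space of degree-$d$ forms annihilated by every differential operator $\ell_\sigma^{r+1}(\partial)$, and this inverse system can be identified with a space of homogeneous $C^r$ splines naturally associated to $\Lambda$. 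In particular, $\tJ(\gamma)_d=\tS_d$ becomes equivalent to the statement that the corresponding planar spline space contains only trivial (globally polynomial) splines in degree $d$.

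In the generic case $f_1^\circ(\Delta_\gamma)\ge 6$, the triangulation $\Lambda$ has a genuine triangular boundary and sufficiently many interior vertices, so Whiteley's theorem~\cite{WhiteleyComb} applies: a generic planar triangulation with triangular boundary carries only trivial $C^r$ splines in degrees at most $\lfloor (3r+1)/2\rfloor$. This yields $(\tS/\tJ(\gamma))_d=0$ for $d>\lfloor (3r+1)/2\rfloor=D_\gamma$, as claimed. The two small cases $f_1^\circ=4$ (tetrahedral link, four linear forms) and $f_1^\circ=5$ (bipyramidal link, nine linear forms in a highly structured configuration) fall outside Whiteley's hypothesis, and I would handle them separately by computing directly the Hilbert function of the ideal generated by $(r+1)$-th powers of the corresponding configurations of generic linear forms in three variables; these are classical calculations giving the respective thresholds $2r$ and $\lfloor (5r+2)/3\rfloor$. \textbf{The main obstacle} is making the apolarity-based identification of $(\tS/\tJ(\gamma))_d$ with a planar spline dimension precise enough to import Whiteley's triangular-boundary hypothesis from the flattened link, and verifying that genericity of the three-dimensional vertex star translates into genericity of the projected planar triangulation; the two exceptional link types then require an \emph{ad hoc} but elementary treatment.
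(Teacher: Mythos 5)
First, a point of reference: the paper does not prove Proposition~\ref{prop:vertexIdeals} at all --- it is quoted verbatim from \cite[Corollary~3.18]{PaperA} --- so there is no in-paper argument to measure you against. Your upper bound ($\tJ(\gamma)_d\subseteq\tS_d$) is of course fine. The problem is the main step.

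You assert that, via Macaulay's inverse system, ``$\tJ(\gamma)_d=\tS_d$ becomes equivalent to the statement that the corresponding planar spline space contains only trivial splines in degree $d$,'' and you then invoke Whiteley's theorem, which gives triviality of planar splines in degrees $d\le\lfloor(3r+1)/2\rfloor$, to conclude $(\tS/\tJ(\gamma))_d=0$ for $d>\lfloor(3r+1)/2\rfloor$. These two sentences are incompatible: if the equivalence were degree-by-degree as you state it, Whiteley would give $\tJ(\gamma)_d=\tS_d$ for $d\le D_\gamma$ --- the wrong range, and manifestly false since $\tJ(\gamma)_d=0$ for $d\le r$ while $C^r$ splines of degree $\le r$ are always trivial. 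To reach the stated conclusion you have silently reversed the direction of the degree inequality, and no duality is offered that would exchange low degrees with high degrees. The stated equivalence is also not what Macaulay duality gives: by Emsalem--Iarrobino, $\dim(\tS/\tJ(\gamma))_d$ is the dimension of the degree-$d$ part of the ideal of the fat-point scheme of multiplicity $d-r$ supported at the points of the dual projective plane corresponding to the planes $\ell_\sigma$; the multiplicity grows with $d$, which is exactly why large $d$ is the nontrivial range, and this fat-point ideal is not a spline space. A workable continuation is a Bezout-type analysis of that fat-point scheme on the arrangement of $f^\circ_1(\Delta_\gamma)$ lines dual to the interior edges of $\Delta_\gamma$ (each $\ell_\sigma$ is the span of a pair of edge directions, so the dual points are intersection points of that arrangement); the three thresholds in~\eqref{eq:Dgamma}, including the stabilization at $\lfloor(3r+1)/2\rfloor$ for $f^\circ_1\ge 6$, emerge from such an analysis. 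Within the present paper, Whiteley's theorem is the ingredient behind the \emph{complementary} statement, Theorem~\ref{thm:WhitelyGenericLowDegree} (triviality of $\hspl^r_d(\Delta_\gamma)$ for $d\le D_\gamma$), not behind Proposition~\ref{prop:vertexIdeals}. A smaller error: for $f^\circ_1\in\{4,5\}$ the six (resp.\ nine) linear forms are not generic --- they are pairwise spans of $4$ (resp.\ $5$) generic directions --- so the ``classical'' Hilbert function of powers of generic linear forms is not the relevant computation there either.
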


\noindent For a \mesh{} $\Delta$ and vertex $\gamma\in\Delta_0$, we now relate $\LBcs(\Delta_\gamma,d,r)$ and $\LBos(\Delta_\gamma,d,r)$ from Equations~\eqref{eq:LBclosedstar} and~\eqref{eq:LBopenstar} (respectively) to the Euler characteristic of $\calJ[\Delta_\gamma]$.

\begin{proposition}\label{prop:EulerCharBounds}
	Let $\Delta$ be a generic \mesh{}.  If $\gamma\in\Delta^\circ_0$ then
	\begin{align*}
		\LBcs(\Delta_\gamma,d,r)= &\  2\ \binom{d+2}{2}+\chi\bigl(\calJ[\Delta_\gamma],d\bigr)-\dim\tJ(\gamma)_d\\[5 pt]
		= &\  \binom{d+2}{2}+\chi\bigl(\calJ[\Delta_\gamma],d\bigr)\mbox{\quad if }d>D_\gamma,
	\end{align*}
	where $\LBcs(\Delta_\gamma,d,r)$ is defined in Equation~\eqref{eq:LBclosedstar}.  If $\gamma$ is a boundary vertex of $\Delta$ then
	\[
	\LBos(\Delta_\gamma,d)=\binom{d+2}{2}+\chi(\calJ[\Delta_\gamma],d)\mbox{ for all } d\ge 0\ ,
	\]
	where $\LBos(\Delta_\gamma,d,r)$ is defined in Equation~\eqref{eq:LBopenstar}.
\end{proposition}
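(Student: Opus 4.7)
The plan is to compute $\chi(\calJ[\Delta_\gamma],d)$ directly from its definition, substitute the dimension formulas provided by Propositions~\ref{prop:edgeIdeals} and~\ref{prop:vertexIdeals}, and then simply compare the resulting expression to the closed and open vertex star formulas in Equations~\eqref{eq:LBclosedstar} and~\eqref{eq:LBopenstar}. Because $\Delta$ is assumed generic, the inequality in Proposition~\ref{prop:edgeIdeals} becomes an equality, so the Euler characteristic can be expressed in closed form.

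First I would identify the interior faces of a vertex star $\Delta_\gamma$: the interior $2$-faces are exactly the triangles of $\Delta_\gamma$ containing $\gamma$, the interior $1$-faces are exactly the edges containing $\gamma$, and the set of interior $0$-faces is either $\{\gamma\}$ (closed case) or empty (open case). For each interior triangle $\sigma$, the ideal $\tJ(\sigma)=\langle \ell_\sigma^{r+1}\rangle\subset\tS=\R[x_1,x_2,x_3]$ is principal with a generator of degree $r+1$, so $\dim \tJ(\sigma)_d=\binom{d+1-r}{2}$ (with our convention that binomials are zero when the top is less than the bottom). For each interior edge $\tau$, the generic formula of Proposition~\ref{prop:edgeIdeals} gives
\[
\dim \tJ(\tau)_d = t_\tau\binom{d+1-r}{2}-a_\tau\binom{d+1-q_\tau}{2}-b_\tau\binom{d+2-q_\tau}{2}.
\]

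Next I would expand
\[
\chi(\calJ[\Delta_\gamma],d)=\sum_{\sigma\in(\Delta_\gamma)^\circ_2}\dim\tJ(\sigma)_d - \sum_{\tau\in(\Delta_\gamma)^\circ_1}\dim\tJ(\tau)_d + \sum_{v\in(\Delta_\gamma)^\circ_0}\dim\tJ(v)_d,
\]
plug in the values above, and collect the coefficient of $\binom{d+1-r}{2}$ to obtain the bracket $\bigl(f^\circ_2(\Delta_\gamma)-\sum_\tau t_\tau\bigr)$ that appears in both $\LBcs$ and $\LBos$. The terms coming from $a_\tau$ and $b_\tau$ reassemble with a plus sign (from the double negative) into exactly the sum $\sum_\tau\bigl(a_\tau\binom{d+1-q_\tau}{2}+b_\tau\binom{d+2-q_\tau}{2}\bigr)$ appearing in the $\LB$ formulas.

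In the closed case, the remaining $+\dim\tJ(\gamma)_d$ contribution yields
\[
\chi(\calJ[\Delta_\gamma],d)=\LBcs(\Delta_\gamma,d,r)-2\binom{d+2}{2}+\dim\tJ(\gamma)_d,
\]
which rearranges to the first asserted identity. Invoking Proposition~\ref{prop:vertexIdeals} to replace $\dim\tJ(\gamma)_d$ by $\binom{d+2}{2}$ whenever $d>D_\gamma$ gives the second asserted identity. In the open case there is no interior vertex, so the Euler characteristic equals $\LBos(\Delta_\gamma,d,r)-\binom{d+2}{2}$ for all $d\ge 0$, yielding the final claim.

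The computation is essentially bookkeeping once the generic hypothesis turns the edge inequality into an equality, so there is no real obstacle; the only thing to be careful about is making sure the sign conventions in the Euler characteristic match the signs in the $\LB$ formulas, and that the constant $2\binom{d+2}{2}$ versus $\binom{d+2}{2}$ split correctly reflects the fact that $\tJ(\gamma)$ is counted in the closed case but not in the open case.
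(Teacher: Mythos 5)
Your proposal is correct and follows essentially the same route as the paper: both take the graded Euler characteristic of the chain complex $\calJ[\Delta_\gamma]$ (which has one fewer term in the open case), substitute $\dim\tJ(\sigma)_d=\binom{d+1-r}{2}$ together with Propositions~\ref{prop:edgeIdeals} and~\ref{prop:vertexIdeals}, and match the result against Equations~\eqref{eq:LBclosedstar} and~\eqref{eq:LBopenstar}. The only caveat is a harmless imprecision in identifying the interior $2$-faces of an \emph{open} vertex star (a triangle containing $\gamma$ can still lie on the boundary of $\Delta_\gamma$), which does not affect the bookkeeping since everything is expressed in terms of $f^\circ_2(\Delta_\gamma)$ and sums over $\Delta^\circ_1$.
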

\begin{proof}
	If $\gamma$ is an interior vertex then $\Delta_\gamma$ is a closed vertex star $\calJ[\Delta_\gamma]$ has the form
	$
	0\rightarrow \bigoplus_{\sigma\in\Delta^\circ_2} \tJ(\sigma)\rightarrow \bigoplus_{\tau\in\Delta^\circ_1} \tJ(\tau) \rightarrow \tJ(\gamma)\rightarrow 0.
	$
	Taking the graded Euler characteristic, the first equation now follows from the fact that $\dim\tJ(\sigma)_d=\binom{d+1-r}{2}$, Proposition~\ref{prop:edgeIdeals}, and Proposition~\ref{prop:vertexIdeals}.  If $\gamma$ is a boundary vertex then $\Delta_\gamma$ is an open vertex star and $\calJ[\Delta_\gamma]$ has the form
	$
	0\rightarrow \bigoplus_{\sigma\in\Delta^\circ_2} \tJ(\sigma)\rightarrow \bigoplus_{\tau\in\Delta^\circ_1} \tJ(\tau) \rightarrow 0.
	$
	Taking the graded Euler characteristic, the first equation now follows from the fact that $\dim\tJ(\sigma)_d=\binom{d+1-r}{2}$ and Proposition~\ref{prop:edgeIdeals}.
\end{proof}

\begin{proposition}\label{prop:DimensionsCountsForTetrahedralComplexes}
	Suppose $\Delta\subset\R^3$ is a \mesh{}, $r\ge 0$ is an integer, $\sigma\in\Delta_2$, $\tau\in\Delta_1$, and $\gamma\in\Delta^\circ_0$.  Then
	\begin{equation}\label{eq:4varcodim1}	
	\dim \wS_d=  \binom{d+3}{3}\ ,\quad 
	\dim \tJ(\wsigma)_d=  \binom{d+2-r}{3}\ ,
	\end{equation}
	
	\begin{align}
		\dim \tJ(\wtau)_d\le \ & t_\tau\binom{d+2-r}{3}-a_\tau\binom{d+2-q_\tau}{3}-b_\tau\binom{d+3-q_\tau}{3}\ ,	\label{eq:4varcodim2}\\[3 pt]
		\dim \tJ(\wgamma)_d=\  & \binom{d+3}{3}+\sum\limits_{i=0}^{D_\gamma} \left(\dim\tJ(\gamma)_i-\binom{d+2}{2}\right)\ , \label{eq:4varcodim3}
	\end{align}
	$\dim H_2(\calR/\calJ[\wDelta])_d=C$ for some positive integer $C$ and $d\gg0$, and $\dim H_1(\calR/\calJ[\wDelta])_d=  0$ for  $d\gg 0$.	If $\Delta$ is generic then~\eqref{eq:4varcodim2} is an equality.
\end{proposition}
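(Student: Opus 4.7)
The first four assertions are mechanical consequences of results already established. The identity $\dim \wS_d = \binom{d+3}{3}$ is standard monomial counting in four variables. Since $\tJ(\wsigma) = \langle \ell_{\wsigma}^{r+1}\rangle$ is principal of degree $r+1$, its degree-$d$ piece has dimension $\dim \wS_{d-r-1} = \binom{d+2-r}{3}$. For the edge and vertex formulas, I would start from the coning identity~\eqref{eq:IdealHomogenize}, $\dim \tJ(\wh{\beta})_d = \sum_{i=0}^d \dim \tJ(\beta)_i$, substitute the bound of Proposition~\ref{prop:edgeIdeals} (respectively the bound of Proposition~\ref{prop:vertexIdeals}), and apply the hockey-stick identity $\sum_{i=0}^d \binom{i+k}{k} = \binom{d+k+1}{k+1}$ to convert each 2-variable binomial into the 3-variable binomial appearing in the claim. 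For the vertex, I would split the sum at $i = D_\gamma$ before applying hockey-stick to the tail, which isolates the low-degree defect into the correction sum. The genericity statement for~\eqref{eq:4varcodim2} is inherited directly from Proposition~\ref{prop:edgeIdeals}.

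For the two homological assertions, I would apply Proposition~\ref{prop:FrequentlyUsedIsomorphisms} to the cone $\wDelta$, which is an open vertex star of the cone vertex whose link is $\Delta$ (homeomorphic to a 3-disk under the standing topological assumption). This gives isomorphisms $H_1(\calR/\calJ[\wDelta]) \cong H_0(\calJ[\wDelta])$ and $H_2(\calR/\calJ[\wDelta]) \cong H_1(\calJ[\wDelta])$, reducing the problem to analyzing the lower homologies of $\calJ[\wDelta]$. The module $H_0(\calJ[\wDelta])$ is the cokernel of $\bigoplus_\tau \tJ(\wtau) \to \bigoplus_\gamma \tJ(\wgamma)$; since each $\tJ(\wgamma)$ is generated by powers of linear forms whose common zero locus is just the cone axis through $\gamma$, a local argument shows this cokernel has finite length as a graded $\wS$-module, so its Hilbert function vanishes for $d \gg 0$, yielding $\dim H_1(\calR/\calJ[\wDelta])_d = 0$ in large degree. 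For $H_1(\calJ[\wDelta])$, I would show its support has Krull dimension at most one, so that its Hilbert function eventually stabilizes at the asserted constant $C$.

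The main obstacle is this last structural analysis of $H_1(\calJ[\wDelta])$: bounding its Krull dimension and controlling the stable value $C$. The plan is a local-at-each-vertex analysis combined with a sheaf-theoretic or local-cohomology argument in the spirit of Schenck--Stillman~\cite{LCoho}. Alternatively, one can chase the long exact sequence associated to $0 \to \calJ \to \calR \to \calR/\calJ \to 0$, using the 3-disk hypothesis and excision to conclude $H_i(\calR[\wDelta]) = 0$ for $0 \le i < 3$, and then combine this with the localized contributions at each interior vertex (controlled by Proposition~\ref{prop:EulerCharBounds} together with the edge and vertex bounds of Propositions~\ref{prop:edgeIdeals} and~\ref{prop:vertexIdeals}) to pin down the large-degree behavior of both $H_1$ and $H_2$. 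The positivity of $C$ should then follow once one observes that any interior vertex of $\Delta$ with nontrivial star cohomology contributes a nonzero class, while vanishing in generic low-complexity situations must be handled as a degenerate case.
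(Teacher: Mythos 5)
Your treatment of the numerical statements \eqref{eq:4varcodim1}--\eqref{eq:4varcodim3} matches the paper's: both derive them from Equation~\eqref{eq:IdealHomogenize} together with Propositions~\ref{prop:edgeIdeals} and~\ref{prop:vertexIdeals}, summing the planar formulas via the hockey-stick identity (with Convention~\ref{conv:BinomialCoefficients} handling the truncation) and splitting the vertex sum at $D_\gamma$. That part is correct, including the observation that genericity of \eqref{eq:4varcodim2} is inherited from Proposition~\ref{prop:edgeIdeals}.

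The homological assertions are where the proposal has a genuine gap. First, your reduction via Proposition~\ref{prop:FrequentlyUsedIsomorphisms} assumes the link of the cone vertex of $\wDelta$ --- which is $\Delta$ itself --- is homeomorphic to a $3$-disk. That is not a standing hypothesis: the theorem is asserted for any triangulation of a compact three-manifold with boundary, and the paper's own examples include the square-shaped torus (Section~\ref{ss:torus}) and the Morgan--Scott partition with a cavity (Section~\ref{ss:ms-cavity}), neither of which is a disk. For the solid torus, $H_2(\calR[\wDelta])\cong H_2(\Delta,\partial\Delta;\tS)\cong \tS\neq 0$, so the isomorphism $H_2(\calR/\calJ[\wDelta])\cong H_1(\calJ[\wDelta])$ fails; the long exact sequence only yields a surjection of $H_2(\calR/\calJ[\wDelta])$ onto $H_1(\calJ[\wDelta])$ with kernel $\coker\bigl(H_2(\calJ[\wDelta])\to H_2(\calR[\wDelta])\bigr)$, and you would still need to show that this cokernel has eventually constant Hilbert function. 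Second, even in the disk case, the substantive claims --- that $H_0(\calJ[\wDelta])$ has finite length and that $H_1(\calJ[\wDelta])$ has Krull dimension at most one --- are only announced (``a local argument shows,'' ``I would show'') rather than proved. These are precisely the content of the result the paper invokes, namely Schenck's bound on the dimension of the homology modules of $\calR/\calJ$ in \cite[Lemma~3.1]{Spect}, whose proof is a localization argument at primes of each dimension and is not a formality. As written, the homological half of your proposal substitutes an unproved (and, in the stated generality, partly false) reduction for the citation that actually carries the argument; to repair it, either invoke \cite{Spect} directly, or carry out the localization analysis for $H_1(\calJ[\wDelta])$ and $H_0(\calJ[\wDelta])$ without assuming anything about the global topology of $\Delta$ beyond the manifold-with-boundary hypothesis.
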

\begin{proof}
	Equations~\eqref{eq:4varcodim1} are straightforward to derive.  Equations~\eqref{eq:4varcodim2} and~\eqref{eq:4varcodim3} follow from Propositions~\ref{prop:edgeIdeals} and~\ref{prop:vertexIdeals}, respectively, using Equation~\eqref{eq:IdealHomogenize}.  
	It follows from~\cite[Lemma~3.1]{Spect} that $\dim H_2(\calR/\calJ[\wDelta])_d=C$ for a positive integer $C$ and $H_1(\calR/\calJ[\wDelta])$ vanishes in large degree.
\end{proof}
We now provide a lower bound on the integer $C$ satisfying $\dim H_2(\calR/\calJ[\wDelta])_d=C$ for $d\gg 0$ (see Proposition~\ref{prop:DimensionsCountsForTetrahedralComplexes}).  The key is to describe the effect of the coning construction $\Delta\to\wDelta$ on the homology module $H_2(\calR/\calJ[\Delta])$ in large degree.

\begin{proposition}\label{prop:AssHom}
	Let $\Delta\subset\R^3$ be a \mesh{}.  Then, for $d\gg 0$,
	\begin{align*}
		\dim H_2(\calR/\calJ[\wDelta])_d= & \sum_{\gamma\in\Delta_0}\sum_{i \ge 0} \left( \dim\hspl^r_i(\Delta_\gamma)-\chi\bigl(\calR/\calJ[\Delta_\gamma],i\bigr) \right)\\
		=& \sum\limits_{\gamma\in\Delta_0}\sum\limits_{i=0}^{3r+1} \left( \dim\hspl^r_i(\Delta_\gamma)-\dbinom{i+2}{2}-\chi\bigl(\calJ[\Delta_\gamma],i\bigr) \right)\\
		\ge & \sum\limits_{\gamma\in\Delta_0}\sum\limits_{i=0}^{3r+1}[-\chi(\calJ[\Delta_\gamma],i)]_+
	\end{align*}
\end{proposition}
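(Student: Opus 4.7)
\medskip
\noindent\textbf{Proof plan.}  The proposition consists of three assertions, and I will handle them in order of increasing difficulty.  The second equality and the third inequality are essentially bookkeeping once Equation~\eqref{eq:celldimformula} is applied locally at each vertex; the first equality is the real content.

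For the second equality, the key observation is that every vertex star $\Delta_\gamma$---whether closed or open---is topologically a $3$-ball, so $f_3-f^\circ_2+f^\circ_1-f^\circ_0=1$ (as illustrated explicitly in the excerpt by the octahedral closed star and the coned Morgan--Scott open star).  This gives $\chi(\calR[\Delta_\gamma],i)=\binom{i+2}{2}$ and, via Equation~\eqref{eq:EulerCharESCC}, $\chi(\calR/\calJ[\Delta_\gamma],i)=\binom{i+2}{2}+\chi(\calJ[\Delta_\gamma],i)$, yielding the second equality.  Applying Equation~\eqref{eq:celldimformula} identifies each summand as $\dim H_1(\calJ[\Delta_\gamma])_i$; since \cite{ANS96} asserts that $\LBcs(\Delta_\gamma,i,r)$ (respectively $\LBos(\Delta_\gamma,i,r)$) equals $\dim\hspl^r_i(\Delta_\gamma)$ for $i\ge 3r+2$, Proposition~\ref{prop:EulerCharBounds} forces $\dim H_1(\calJ[\Delta_\gamma])_i=0$ in that range.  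Hence the sum truncates at $3r+1$.

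For the third inequality, I would combine two lower bounds for the summand $\dim H_1(\calJ[\Delta_\gamma])_i$.  The first is non-negativity (it is a dimension).  The second is $\dim\hspl^r_i(\Delta_\gamma)\ge\binom{i+2}{2}$, coming from the globally polynomial splines of degree $i$; rewriting the summand as $\dim\hspl^r_i(\Delta_\gamma)-\binom{i+2}{2}-\chi(\calJ[\Delta_\gamma],i)$ this gives $\ge-\chi(\calJ[\Delta_\gamma],i)$.  Combining yields $\ge[-\chi(\calJ[\Delta_\gamma],i)]_+$.

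The main obstacle is the first equality, a local-to-global statement asserting that the stable value of $\dim H_2(\calR/\calJ[\wDelta])_d$ in large degree equals a sum of vertex-local contributions.  My approach is to use Proposition~\ref{prop:FrequentlyUsedIsomorphisms} applied to the open vertex star $\wDelta$ (whose link is the topological $3$-ball $\Delta$) to obtain $H_2(\calR/\calJ[\wDelta])\cong H_1(\calJ[\wDelta])$, reducing matters to computing $\dim_{\R}H_1(\calJ[\wDelta])_d$ for $d\gg 0$.  The results of \cite{Spect} cited in Proposition~\ref{prop:DimensionsCountsForTetrahedralComplexes} guarantee this stabilizes to a finite constant $C$; the task is to identify $C$ with $\sum_{\gamma\in\Delta_0}\dim_\R H_1(\calJ[\Delta_\gamma])$.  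I would argue this by analyzing the associated primes of $H_1(\calJ[\wDelta])$: since the cokernel $H_0(\calJ[\wDelta])$ vanishes in high degree, $H_1(\calJ[\wDelta])$ becomes a finite-length module whose support is contained in the set of vertex primes $\langle\wh{\gamma}\rangle$ for $\gamma\in\Delta_0$.  One then computes the multiplicity at each such vertex prime by comparing with the local chain complex $\calJ[\Delta_\gamma]$, using the coning identity $\dim\tJ(\wh\beta)_d=\sum_{i\le d}\dim\tJ(\beta)_i$ from Equation~\eqref{eq:IdealHomogenize} to translate between homogeneous degrees in $\wS$ and $\tS$.

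The hardest part will be making rigorous the decomposition of $H_1(\calJ[\wDelta])$ into vertex-indexed contributions, and in particular handling the boundary vertices of $\Delta$: these do not appear as interior vertices of $\wDelta$, yet they contribute to the right-hand sum via the structure of $\calJ[\wDelta]$ along the cone edges $\wh\gamma$.  This requires either a Mayer--Vietoris-type spectral sequence filtering $\wDelta$ by the subcomplexes $\wh{\Delta_\gamma}$, or a direct chain-level comparison showing that the obstructions encoded by $H_1(\calJ[\wDelta])$ are exactly collected vertex-by-vertex by $H_1(\calJ[\Delta_\gamma])$ after coning.
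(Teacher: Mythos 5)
Your handling of the second equality and the final inequality is correct and essentially the paper's own argument: each vertex star is a $3$-ball, so $\chi(\calR[\Delta_\gamma],i)=\binom{i+2}{2}$ and Equation~\eqref{eq:EulerCharESCC} converts $\chi(\calR/\calJ[\Delta_\gamma],i)$ into $\binom{i+2}{2}+\chi(\calJ[\Delta_\gamma],i)$; Equation~\eqref{eq:celldimformula} identifies each summand with $\dim H_1(\calJ[\Delta_\gamma])_i$; and the two lower bounds $\dim H_1(\calJ[\Delta_\gamma])_i\ge 0$ and $\dim\hspl^r_i(\Delta_\gamma)\ge\binom{i+2}{2}$ give the $[\,\cdot\,]_+$. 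One small remark: for the truncation at $3r+1$ you route through $\LBcs$, $\LBos$ and Proposition~\ref{prop:EulerCharBounds}, which are stated only for \emph{generic} $\Delta$; the paper instead invokes the main result of~\cite{ANS96} (see also~\cite{RegSplines}) directly in the form $\dim\hspl^r_i(\Delta_\gamma)=\binom{i+2}{2}+\chi(\calJ[\Delta_\gamma],i)$ for $i\ge 3r+2$, which needs no genericity hypothesis and matches the statement of the proposition for an arbitrary \mesh{}.

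The genuine gap is the first equality, which you correctly single out as the real content but do not prove. The paper does not prove it from scratch either: it is exactly \cite[Corollary~9.2]{AssHom}, and its proof consists of that citation. Your sketch points at the right machinery---$H_2(\calR/\calJ[\wDelta])\cong H_1(\calJ[\wDelta])$ is a graded $\wS$-module whose support in large degree lies only on the codimension-three primes of the cone lines $\wh{\gamma}$, $\gamma\in\Delta_0$, and its multiplicity along each such line is computed from the local complex at $\gamma$---and this is indeed how~\cite{AssHom} proceeds. But two things in your plan need repair. First, ``finite-length module'' is wrong: a finite-length graded module has Hilbert function eventually zero, which would force $C=0$; the correct statement is that the module has Krull dimension one (it is supported on the lines $\wh{\gamma}$), which is precisely why its Hilbert function is eventually a \emph{nonzero} constant, namely the sum of the multiplicities along those lines. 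Second, the identification of the multiplicity at $\wh{\gamma}$ with $\sum_{i\ge 0}\dim H_1(\calJ[\Delta_\gamma])_i$---including for boundary vertices, which you rightly flag as delicate---is the entire substance of \cite[Corollary~9.2]{AssHom}; deferring it to ``either a Mayer--Vietoris-type spectral sequence or a direct chain-level comparison'' leaves the proposition unproved. If you do not intend to reproduce that localization argument, the honest move is the paper's: cite it.
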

\begin{proof}
	The first equality is~\cite[Corollary~9.2]{AssHom}.  For the second equality,
	\[
	\dim H_1(\calJ[\Delta_\gamma])_i=\dim\hspl^r_i(\Delta)-\binom{d+2}{2}-\chi(\calJ[\Delta_\gamma],i)
	\]
	is an immediate consequence of Equation~\eqref{eq:celldimformula}.  It follows from the main result of~\cite{ANS96} (see also~\cite{RegSplines}) that $\dim\hspl^r_i(\Delta_\gamma)=\binom{i+2}{2}+\chi(\calJ[\Delta_\gamma],i)$ for $i\ge 3r+2$\ .  In other words, $H_1(\calJ[\Delta_\gamma])_i=0$ for $i\ge 3r+2$.  The final inequality follows from the fact that $\hspl^r_i(\Delta_\gamma)$ always contains the space of global homogeneous polynomials of degree $i$, which has dimension $\binom{i+2}{2}$\ .
\end{proof}

We prove in~\cite{PaperA} the following modification of a result of Whiteley~\cite{WhiteleyComb}.

\begin{theorem}\cite[Theorem~1.3]{PaperA}\label{thm:WhitelyGenericLowDegree}
	If $\Delta$ is a generic closed star with interior vertex $\gamma$, then $\dim \hspl^r_d(\Delta)=\binom{d+2}{2}$ for $d\le D_\gamma$.
\end{theorem}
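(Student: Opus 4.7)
The plan is to rephrase the claim algebraically and reduce the main case to Whiteley's planar theorem. By Proposition~\ref{prop:FrequentlyUsedIsomorphisms}, $\hspl^r(\Delta_\gamma) \cong \tS \oplus H_2(\calJ[\Delta_\gamma])$ as graded vector spaces, so $\dim \hspl^r_d(\Delta_\gamma) = \binom{d+2}{2} + \dim H_2(\calJ[\Delta_\gamma])_d$, and it suffices to show that every $F \in \hspl^r_d(\Delta_\gamma)$ with $d \le D_\gamma$ is a global polynomial. Writing $L_\gamma$ for the link of $\gamma$, which is a triangulated $2$-sphere with $f_1^\circ(\Delta_\gamma)$ vertices, the argument naturally splits according to whether $f_1^\circ$ is $4$, $5$, or at least $6$, matching the three cases in the definition~\eqref{eq:Dgamma} of $D_\gamma$.

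For the main case $f_1^\circ \ge 6$, I would reduce to Whiteley's planar theorem. A combinatorial input (any sphere triangulation with at least six vertices contains a separating $3$-cycle) furnishes three edges of $L_\gamma$ forming a triangle $T$ that separates $L_\gamma$ into two triangulated disks $D_1, D_2$ with common boundary $T$. For each $i$, the sub-complex $\Sigma_i \subset \Delta_\gamma$ of tetrahedra whose link-face lies in $D_i$ is a vertex star of $\gamma$ with link $D_i$, and a linear change of coordinates identifies it with the cone $\wh{D}_i$ over a planar generic disk triangulation with triangular boundary. Theorem~\ref{thm:Homogenize} then yields $\hspl^r_d(\Sigma_i) \cong \spl^r_d(D_i)$, and Whiteley~\cite{WhiteleyComb} gives $\spl^r_d(D_i)$ trivial for $d \le \lfloor(3r+1)/2\rfloor = D_\gamma$, so each $F|_{\Sigma_i} = P_i$ is a global polynomial. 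Because $F$ is $C^r$ across the three triangles $\sigma_1, \sigma_2, \sigma_3$ comprising $\Sigma_1 \cap \Sigma_2$, the difference $P_1 - P_2$ lies in $\bigcap_j \langle \ell_{\sigma_j}^{r+1} \rangle = \langle \ell_{\sigma_1}^{r+1} \ell_{\sigma_2}^{r+1} \ell_{\sigma_3}^{r+1} \rangle$ (the $\ell_{\sigma_j}$ are pairwise coprime for generic $\gamma$), a polynomial of degree $3(r+1) > D_\gamma$, forcing $P_1 = P_2$ and hence $F$ globally polynomial.

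The cases $f_1^\circ = 4$ (simplex-boundary link) and $f_1^\circ = 5$ (triangular-bipyramid link) must be handled separately: no separating $3$-cycle exists, and the bounds $D_\gamma = 2r$ and $D_\gamma = \lfloor(5r+2)/3\rfloor$ are actually sharper than the planar Whiteley bound because the link has few vertices. Here I would analyze $\calJ[\Delta_\gamma]$ directly: the chain complex has only a handful of summands, the map $\partial_2\colon \bigoplus_{\sigma \in \Delta^\circ_2} \tJ(\sigma) \to \bigoplus_{\tau \in \Delta^\circ_1} \tJ(\tau)$ can be written explicitly in terms of the homogeneous linear forms $\ell_\sigma$, and its rank in each degree $d$ can be bounded below using genericity and the edge-ideal dimensions of Proposition~\ref{prop:edgeIdeals}; the stated values of $D_\gamma$ emerge as the smallest degrees at which the rank is large enough to force $H_2(\calJ[\Delta_\gamma])_d = 0$. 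The hardest parts of the plan are the combinatorial lemma producing the separating $3$-cycle (and ensuring that genericity of $\Delta_\gamma$ passes to the induced planar disks so that Whiteley applies), together with the ad-hoc computations for $f_1^\circ \in \{4,5\}$ that produce the sharp non-planar bounds $2r$ and $\lfloor(5r+2)/3\rfloor$.
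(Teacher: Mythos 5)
The paper does not actually prove this statement --- it is imported wholesale from \cite[Theorem~1.3]{PaperA} --- so your proposal has to stand on its own, and as written it has two genuine gaps. Your reformulation (reduce to showing every $F\in\hspl^r_d(\Delta_\gamma)$ with $d\le D_\gamma$ is globally polynomial, equivalently $H_2(\calJ[\Delta_\gamma])_d=0$) is correct, and reducing the case $f_1^\circ\ge 6$ to Whiteley's planar theorem is the right instinct; indeed the paper itself calls the result ``a modification of a result of Whiteley.'' But the combinatorial lemma you lean on is false: a triangulated $2$-sphere has a non-facial $3$-cycle if and only if its graph fails to be $4$-connected, and the octahedron --- which is exactly the link of the interior vertex in the paper's own running example in Fig.~\ref{fig:3DMS} --- and the icosahedron have \emph{only} facial $3$-cycles. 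If you instead allow $T$ to be the boundary of a link face, the decomposition becomes one tetrahedron plus its complement, but then the asserted ``linear change of coordinates identifying $\Sigma_2$ with $\wh{D_2}$ for a planar $D_2$'' fails: the link of $\Sigma_2$ is the sphere minus one face, its vertices surround $\gamma$, and no hyperplane off the origin meets all the rays, so $\Sigma_2$ is not the cone over any planar triangulation. This is repairable --- flattenable realizations of the combinatorial disk form a nonempty open subset (via a Schlegel-type drawing with $T$ as outer face) of the irreducible parameter space of all vertex-star realizations, and upper semicontinuity of $\dim\hspl^r_d$ forces the generic dimensions to agree --- but that semicontinuity argument is the actual content of the reduction and is missing from your sketch. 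Your gluing step ($P_1-P_2\in\langle \ell_{\sigma_1}^{r+1}\ell_{\sigma_2}^{r+1}\ell_{\sigma_3}^{r+1}\rangle$, degree $3r+3>D_\gamma$) is fine.

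The second gap is more serious: for $f_1^\circ\in\{4,5\}$ you offer no argument at all. Saying that the rank of $\partial_2\colon\bigoplus_\sigma\tJ(\sigma)\to\bigoplus_\tau\tJ(\tau)$ ``can be bounded below using genericity and the edge-ideal dimensions of Proposition~\ref{prop:edgeIdeals}'' is circular: Proposition~\ref{prop:edgeIdeals} only gives the dimensions of the source and target summands, and the theorem \emph{is} the assertion that $\partial_2$ is injective in degrees up to $2r$ (resp.\ $\lfloor(5r+2)/3\rfloor$), well beyond the planar Whiteley threshold $\lfloor(3r+1)/2\rfloor$. These two configurations (the Alfeld split and the star over the bipyramid link) are precisely where the theorem says something that no planar reduction can give, and they require independent input --- in \cite{PaperA} this comes from the study of ideals generated by powers of the linear forms $\ell_\sigma$ through $\gamma$ via inverse systems and fat points, the same machinery behind Proposition~\ref{prop:vertexIdeals}, where the constants $2r$ and $\lfloor(5r+2)/3\rfloor$ actually originate. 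As it stands, the hardest and most novel part of the statement is exactly the part your plan leaves unaddressed.
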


\begin{corollary}\label{cor:H1ExactLowDegree}
	If $\Delta$ is a generic closed star with interior vertex $\gamma$, then $\dim H_1(\calJ[\Delta])_d=-\chi(\calJ[\Delta],d)$ \ for\  $d\le D_\gamma$.
\end{corollary}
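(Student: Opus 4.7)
The plan is to simply combine the dimension formula from Proposition~\ref{prop:EulerCharacteristicAndDimension} with the low-degree vanishing result of Theorem~\ref{thm:WhitelyGenericLowDegree}; the result essentially falls out of rearranging one equation.

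First I would note that since $\Delta$ is a closed vertex star with interior vertex $\gamma$, its link is homeomorphic to a $2$-sphere, so Equation~\eqref{eq:celldimformula} applies and yields
\[
\dim \hspl^r_d(\Delta) = \dim \tS_d + \chi(\calJ[\Delta],d) + \dim H_1(\calJ[\Delta])_d.
\]
Next I would invoke Theorem~\ref{thm:WhitelyGenericLowDegree}, which, under the genericity hypothesis, states that $\dim \hspl^r_d(\Delta) = \binom{d+2}{2}$ for $d \le D_\gamma$. Since $\dim \tS_d = \binom{d+2}{2}$ as well, substituting into the display above gives
\[
\binom{d+2}{2} = \binom{d+2}{2} + \chi(\calJ[\Delta],d) + \dim H_1(\calJ[\Delta])_d,
\]
and cancelling yields $\dim H_1(\calJ[\Delta])_d = -\chi(\calJ[\Delta],d)$ in the stated range.

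There is essentially no obstacle here; the corollary is a direct bookkeeping consequence of the two cited results, which do all the real work (Theorem~\ref{thm:WhitelyGenericLowDegree} is where the genericity assumption and the bound $d \le D_\gamma$ enter, and Equation~\eqref{eq:celldimformula} is where the topological hypothesis on the link is used). The only thing to be careful about is that the hypothesis of Equation~\eqref{eq:celldimformula} -- that $\Delta$ is a vertex star whose link is a $2$-sphere or $2$-disk -- is indeed satisfied, but this is immediate from the definition of a closed vertex star in a simplicial $3$-manifold with boundary.
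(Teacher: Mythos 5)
Your argument is exactly the paper's: the authors also deduce the corollary immediately from Equation~\eqref{eq:celldimformula} and Theorem~\ref{thm:WhitelyGenericLowDegree}, and your substitution of $\dim\hspl^r_d(\Delta)=\binom{d+2}{2}=\dim\tS_d$ followed by cancellation is the intended bookkeeping. The proof is correct as written.
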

\begin{proof}
	Immediate from Equation~\eqref{eq:celldimformula} and Theorem~\ref{thm:WhitelyGenericLowDegree}.
\end{proof}

\begin{proof}[Proof of Theorem~\ref{thm:LBGenericTet}]
	Since $H_0(\calJ[\wDelta])_d=0$ for $d\gg 0$ by Proposition~\ref{prop:DimensionsCountsForTetrahedralComplexes}, then~\eqref{eq:dimformula} implies that for $d\gg 0$,  
	\begin{equation}\label{eq:dimformulalargedegree}
	\dim \spl^r_d(\Delta)=(f_3-f^\circ_2+f^\circ_1-f^\circ_0)\binom{d+3}{3}+\chi(\calJ[\wDelta],d)+H_1(\calR/\calJ[\wDelta])_d\ .
	\end{equation}
	Hence it suffices to prove that \ 
	$ \displaystyle
	\LB(\Delta,d,r)\le \bigl(f_3-f^\circ_2+f^\circ_1-f^\circ_0\bigr)\binom{d+3}{3}+\chi\bigl(\calJ[\wDelta],d\bigr)+C\,,
	$
	where $C=H_1(\calR/\calJ[\wDelta])_d$ for $d\gg 0$.  
	Put 
	\[
	\chi'(d) := \left(f^\circ_2-\sum\limits_{\tau\in\Delta^\circ_1} t_\tau \right)\binom{d+2-r}{3} +\sum\limits_{\tau\in\Delta^\circ_1} \left( a_\tau\binom{d+2-q_\tau}{3}+b_\tau\binom{d+3-q_\tau}{3}\right)\ ,
	\]
	so $\chi(\calJ[\wDelta],d)=\chi'(d)+\sum_{\gamma\in\Delta^\circ_0} \dim\tJ(\wgamma)_d$ by Equation~\eqref{eq:EulerCharacteristicTrivariate} and Proposition~\ref{prop:DimensionsCountsForTetrahedralComplexes}.
	Another application of Proposition~\ref{prop:DimensionsCountsForTetrahedralComplexes} gives
	\begin{equation}\label{eq:tetchiineq}
	\chi(\calJ[\wDelta],d)=\chi'(d)+f^\circ_0\binom{d+3}{3}+\sum_{\gamma\in\Delta^\circ_0}\sum_{i=0}^{D_\gamma}\left(\dim \tJ(\gamma)_i-\binom{i+2}{2}\right)\ .
	\end{equation}
	Now, by Proposition~\ref{prop:AssHom}, $\dim H_1(\calJ[\wDelta])_d\ge \sum\limits_{\gamma\in\Delta_0}\sum\limits_{i=0}^{3r+1}[-\chi(\calJ[\Delta_\gamma],i)]_+$
	for $d\gg 0$.  Corollary~\ref{cor:H1ExactLowDegree} allows us to remove the $_+$ from the summation for interior vertices in the range $0\le i\le D_\gamma$:
	\begin{multline}\label{eq:tetH1ineq}
		\dim H_1(\calJ[\wDelta])_d \ge \sum\limits_{\gamma\in\Delta^\circ_0}\left( \sum\limits_{i=0}^{D_\gamma} [-\chi(\calJ[\Delta_\gamma],i)] +\sum\limits_{i=D_\gamma+1}^{3r+1} [-\chi(\calJ[\Delta_\gamma],i)]_+ \right)\\ +\sum\limits_{\gamma\in\Delta^\circ\setminus\Delta^\circ_0}\sum_{i=0}^{3r+1} [-\chi(\calJ[\Delta_\gamma],i)]_+ \ ,
	\end{multline}
	for $d\gg 0$.  Combining~\eqref{eq:tetchiineq} and~\eqref{eq:tetH1ineq} with~\eqref{eq:dimformulalargedegree} yields
{\small	\begin{align*}
		\dim\spl^r_d(\Delta)\geq \ & \bigl(f_3-f^\circ_2+f^\circ_1\bigr)\binom{d+3}{3} +\chi'(d)\\
		& +\sum\limits_{\gamma\in\Delta^\circ_0} \left(\sum\limits_{i=0}^{D_\gamma} \left[\dim \tJ(\gamma)_i-\binom{i+2}{2}-\chi(\calJ[\Delta_\gamma],i)\right] +\sum\limits_{i=D_\gamma+1}^{3r+1} \bigl[-\chi(\calJ[\Delta_\gamma],i)\bigr]_+ \right)\\
		&+\sum\limits_{\gamma\in\Delta^\circ\setminus\Delta^\circ_0}\sum\limits_{i=0}^{3r+1} \bigl[-\chi(\calJ[\Delta_\gamma],i)\bigr]_+
	\end{align*}}
	for $d\gg 0$.  By Proposition~\ref{prop:EulerCharBounds}, if $\gamma\in\Delta^\circ_0$,
	\begin{align*}
		\dbinom{i+2}{2}-\LBcs(\Delta_\gamma,i,r)= & \tJ(\gamma)_i-\binom{i+2}{2}-\chi(\calJ[\Delta_\gamma],i)\ , \text{ and }\\
		\dbinom{i+2}{2}-\LBcs(\Delta_\gamma,i,r)= & -\chi(\calJ[\Delta_\gamma],i) \mbox{ for } i>D_\gamma\ .
	\end{align*}
	Also by Proposition~\ref{prop:EulerCharBounds}, if \  $\gamma\in\Delta_0\setminus\Delta^\circ_0$ \ then \ 
	$
	\binom{i+2}{2}-\LBos(\Delta_\gamma,i,r)=-\chi\bigl(\calJ[\Delta_\gamma],i\bigr)\ .$ Thus, 
{\small	\begin{align*}
		\dim\spl^r_d(\Delta)\ge \ & \bigl(f_3-f^\circ_2+f^\circ_1\bigr)\binom{d+3}{3} +\chi'(d)\\
		& +\sum\limits_{\gamma\in\Delta^\circ_0} \left(\sum\limits_{i=0}^{D_\gamma} \left[\dbinom{i+2}{2}-\LBcs(\Delta_\gamma,i,r)\right] +\sum\limits_{i=D_\gamma+1}^{3r+1} \left[\dbinom{i+2}{2}-\LBcs(\Delta_\gamma,i,r)\right]_+ \right)\\
		&+\sum\limits_{\gamma\in\Delta^\circ\setminus\Delta^\circ_0}\sum\limits_{i=0}^{3r+1} \left[\dbinom{i+2}{2}-\LBos(\Delta_\gamma,i,r)\right]_+\\
		=\ & (f_3-f^\circ_2+f^\circ_1)\binom{d+3}{3} +\chi'(d)-f^\circ_0\binom{r+3}{3}\\
		& +\sum\limits_{\gamma\in\Delta^\circ_0} \left(\sum\limits_{i=r+1}^{D_\gamma} \left[\dbinom{i+2}{2}-\LBcs(\Delta_\gamma,i,r)\right]+\sum\limits_{i=D_\gamma+1}^{3r+1} \left[\dbinom{i+2}{2}-\LBcs(\Delta_\gamma,i,r)\right]_+ \right)\\
		&+\sum\limits_{\gamma\in\Delta^\circ\setminus\Delta^\circ_0}\sum\limits_{i=r+1}^{3r+1} \left[\dbinom{i+2}{2}-\LBos(\Delta_\gamma,i,r)\right]_+\\
		=\ & (f_3-f^\circ_2+f^\circ_1)\binom{d+3}{3} +\chi'(d)-f^\circ_0\binom{r+3}{3}+\sum\limits_{\gamma\in\Delta_0} N_\gamma \ =\   \LB(\Delta,d,r)\ ,
	\end{align*}}
	where $N_\gamma$ is defined in~\eqref{eq:Ngamma} and $\LB(\Delta,d,r)$ is defined in~\eqref{eq:LBtetrahedral}.
\end{proof}

\section{Examples}\label{sec:Examples}
In this section we compare our lower bounds with those by Alfeld and Schumaker in \cite{Tri} and Mourrain and Villamizar \cite{D3}.  Except for the non-simplicial partition in Example~\ref{subsec:cube}, the other examples appear in~\cite{Tri}.  It is well-known that for $d\gg 0$, $\dim \spl^r_d(\Delta)$ is a \textit{polynomial} function.  That is, there is a polynomial in $d$ with rational coefficients, which we denote by $P^r_d(\Delta)$, so that $\dim \spl^r_d(\Delta)=P^r_d(\Delta)$ for $d\gg 0$.  (In commutative algebra this is called the \textit{Hilbert polynomial} of $\spl^r(\wDelta)$ -- see Remark~\ref{rem:HilbertPolynomial}.)  We can compute both the exact dimension $\dim \spl^r_d(\Delta)$ and the polynomial $P^r_d(\Delta)$ in Macaulay2~\cite{M2} using the \texttt{Algebraic Splines} package.  We give the computations of $P^r_d(\Delta)$ in Sections~\ref{ss:m-s},\ref{ss:ms-cavity},\ref{ss:torus}, and~\ref{subsec:cube} for generic vertex positions of the examples.  The exact generic dimension $\dim \spl^r_d(\Delta)$ for our examples is shown in the column labeled `gendim' in Tables~\ref{tbl:m-s},\ref{tbl:ms-and-torus}, and \ref{tbl:cube}.  The lower bound from Theorem~\ref{thm:LBGenericTet} is in the column labeled $\LB(d)$, and lower bounds from the literature appear in columns labeled $\LB$ with an appropriate citation.

\subsection{Three dimensional Morgan-Scott}\label{ss:m-s}
Let $\Delta$ be the simplicial complex in Fig.~\ref{fig:3DMS} from Section~\ref{exam:intro}. 
In Table \ref{tbl:m-s} we record the values of the lower bounds on $\dim\spl^r_d(\Delta)$ for $r=3$ and $r=4$.  In column 3 we give the dimension of the space of polynomials of degree at most $d$ (this is $\binom{d+3}{3}$), in columns 4--6 the bounds are obtained by applying the formulas proved in \cite[Theorem 5.1]{D3}, \cite[Example 8.2]{Tri}, and Theorem \ref{thm:LBGenericTet}, respectively.  The last column records the value for the exact dimension for the given order of continuity $r$ and degree $d$. For $d\gg 0$, the lower bounds can be computed as in Example~\ref{exam:intro} and are given by 
\[
\LB(\Delta,d,3) = \frac{5}{3}d^3-41d^2+\frac{451}{2}d-323 \quad \text{and}
\quad
\LB(\Delta,d,4) = \frac{5}{2}d^3-55d^2+\frac{807}{2}d-803\ .
\]
These coincide with the polynomials $P^3_d(\Delta)$ and $P^4_d(\Delta)$, respectively.
\begin{table}
	\centering
	\renewcommand{\arraystretch}{1.2}
	\begin{tabular}{c|c||c|c|c|c|c}
		$r$&$d$&$\binom{d+3}{3}$& \LB \cite{D3} & \LB \cite{Tri}  &$\LB(d)$&\mbox{gendim}\\
		
		\hline
		
		3 & 8 &  165   &  165   &  165    &  137   &  165  \\
		3 & 9 &  220   &  220   &  220    &  208   &  220  \\
		3 & \pmb{10} & 286   &  286   &  286    &  332   &  332  \\
		3 & 11 & 364   &  364   &  364    &  524   &  524  \\
		3 & 12 & 455   &  591   &  593    &  799   &  799  \\
		3 & 13 & 560   &  964   &  948    &  1172   &  1172  \\
		
		\hline
		
		4 & 11  &   364    &  364   &  364     &  308    & 364  \\
		4 & 12  &   455    &  455   &  455     &  439    & 455  \\
		4 & \pmb{13}  &   560    &  560   &  560     &  640    & 640  \\
		4 & 14  &   680    &  680   &  680     &  926    & 926  \\
		4 & 15  &   680    &  896   &  832     & 1312    & 1312 \\
	\end{tabular}
	\caption{Lower bounds on $\dim \spl^r_d(\Delta)$, where $\Delta$ is the three dimensional Morgan-Scott partition in Fig.~\ref{fig:3DMS}; see Example~\ref{ss:m-s}.  The initial degree is bolded.}\label{tbl:m-s}
\end{table}

\subsection{Morgan-Scott with a cavity}\label{ss:ms-cavity}
We consider $\Delta$ as the partition obtained by removing the central tetrahedron in Fig.~\ref{fig:3DMS}. 
In Table~\ref{tbl:ms-cavity} we list the values of the lower bound in Theorem~\ref{thm:LBGenericTet} applied for $r=1,\dots,4$ along with those presented in~\cite[Example~8.4]{Tri}.
For this partition we have $f_3= 14$ tetrahedra, $f_2^\circ=24$, $f_2^\circ=12$, and $f_0^\circ=0$. Applying \eqref{eq:LBtetrahedral} in Theorem~\ref{thm:LBGenericTet} we get 
\[
\LB(\Delta,d,1) = \frac{7}{3}d^3 - 10 d^2+\frac{41}{3}d+2\ ,
\quad 
\LB(\Delta,d,2) = \frac{7}{3}d^3 - 22d^2+\frac{185}{3}d-10\ , 
\]
\[
\LB(\Delta,d,3) = \frac{7}{3}d^3 - 34d^2+\frac{473}{3}d - 142\ ,\text{ and} 
\quad
\LB(\Delta,d,4) = \frac{7}{3}d^3 - 46d^2+\frac{869}{3}d- 406\ .
\]
As shown in Table~\ref{tbl:ms-cavity}, for $r=1,\dots, 4$, the bound $\LB(\Delta,d,r)$ gives the exact dimension of $\spl^r_d(\Delta)$ beginning at the  the initial degree of $\spl^r(\Delta)$.  Hence the polynomials $\LB(\Delta,d,r)$ coincide with the polynomials $P^r_d(\Delta)$ for $r=1,2,3,4$.
\subsection{Square--shaped torus}\label{ss:torus}
We consider the tetrahedral decomposition of the square-shaped torus depicted on the left in Fig.~\ref{fig:cube}.  This is composed of four three-dimensional `trapezoids,' each of which is split into six tetrahedra along an interior diagonal.  We have $f_3=24$, $f^\circ_2=32$, $f^\circ_1=8$, and $f^\circ_0=0$.  An explicit set of faces and coordinates is provided in \cite[Example 8.3]{Tri}.  In Table~\ref{tbl:ms-and-torus} we list the values of the lower bound of Theorem~\ref{thm:LBGenericTet} applied for $r=1,\ldots,4$ along with those presented in \cite[Example~8.3]{Tri}.
We have, 
\[
\LB(\Delta,d,1) = 4d^3 - 8 d^2 + 4d\ , 
\quad
\LB(\Delta,d,2) = 4d^3 - 24d^2 + 44d - 24\ , 
\]
\[\LB(\Delta,d,3) = 4d^3 - 40d^2 + 128d - 132\ ,\text{ and} 
\quad
\LB(\Delta,d,4) = 4d^3 - 56d^2 + 25d - 360\ .
\]
Again, the polynomials $\LB(\Delta,d,r)$ coincide with $P^r_d(\Delta)$ for $r=1,2,3, 4$.

\begin{table}[htbp]
	\centering
	\begin{tabular}{ccc}
		\begin{subtable}{0.2\linewidth}	
			\centering	
			\renewcommand{\arraystretch}{1.2}	
			\begin{tabular}{c|c||c}
				$r$&$d$&$\binom{d+3}{3}$ \\
				
				\hline
				
				1 & 2 &   10   \\
				1 & 3 &   20   \\
				1 & 4 &   35   \\
				1 & 5 &   56   \\
				
				\hline 

				2 & 4 &   35    \\
				2 & 5 &   56    \\
				2 & 6 &   84    \\
				2 & 7 &   120   \\
				2 & 8 &   165   \\
				2 & 9 &   220   \\
				
				\hline 
				
				3 & 6 &   84      \\
				3 & 7 &   120     \\
				3 & 8 &   165     \\
				3 & 9 &   220     \\
				3 & 10 &  286     \\
				3 & 11 &  364     \\
				
				\hline 
				
				4 & 8 &   165  \\
				4 & 9 &   220  \\
				4 & 10 &  286  \\
				4 & 11 &  364  \\
				4 & 12 &  455  \\
				4 & 13 &  560  \\
			\end{tabular}
			\caption*{ }
		\end{subtable}
		&
		\begin{subtable}{0.32\linewidth}
			\centering	
			\renewcommand{\arraystretch}{1.2}	
			\begin{tabular}{c|c|c}
				\LB \cite{Tri} &$\LB(d)$&{\mbox{gendim}}\\
				
				\hline
				
				10  &    8   &    10      \\
				20  &   16   &    20      \\
				46  &	 46   &    46      \\
				112  &  112   &   112      \\
				
				\hline 
				
				35  &    34    &    35    \\
				56  &	  40    &    56    \\
				84  &    72    &    84    \\
				120  &   144    &   144    \\
				242  &   270    &   270    \\
				436  &   464    &   464    \\
				
				\hline 
				
				84  &    84   &   84     \\
				120  &    96   &  120     \\
				165  &   138   &  165     \\
				220  &   224   &  224     \\
				286  &   368   &  368     \\
				428  &   584   &  584     \\
				
				\hline 
				
				165  &    162    &   165   \\
				220  &    176    &   220   \\
				286  &    224    &   286   \\
				364  &    320    &   364   \\
				455  &    478    &   478   \\
				560	 &    712    &   712   \\
				
			\end{tabular}
			\caption{Morgan-Scott with cavity.}\label{tbl:ms-cavity}
		\end{subtable}
		&
		\begin{subtable}{0.35\linewidth}
			\centering	
			\renewcommand{\arraystretch}{1.2}	
			\begin{tabular}{c|c|c}
				LB\cite{Tri} &$\LB(d)$&{\mbox{gendim}}\\
				
				\hline
				
				10  &     8    &    10      \\
				48  &    48    &    48      \\
				144  &	 144    &   144      \\
				320  &   320    &   320      \\
				
				\hline 
				
				35  &   24    &    35     \\
				93  &	  96    &    96     \\
				237  &  240    &   240     \\
				477  &  480    &   480     \\
				837  &  840    &   840     \\
				1341  & 1344    &  1344    \\

				\hline 
				
				84  &    60   &    84     \\
				151  &   176   &   176     \\
				351  &   380   &   380     \\
				663  &   696   &   696     \\
				1111  &  1148   &  1148     \\
				1719  &  1760   &  1760     \\

				\hline 
				
				165  &   120   &   165   \\
				220  &   288   &   288   \\
				483  &   560   &   560   \\
				875  &   960   &   960   \\
				1419  &  1512   &  1512   \\
				2139  &  2240   &  2240   \\
			\end{tabular}
			\caption{Square--shaped torus.}\label{tbl:torus}
		\end{subtable}
	\end{tabular}
	\caption{Lower bounds for the partitions in Examples~\ref{ss:ms-cavity} 
		(Table \ref{tbl:ms-cavity}) and \ref{ss:torus} (Table \ref{tbl:torus}).} \label{tbl:ms-and-torus}.
\end{table}

\subsection{Non-simplicial partition}\label{subsec:cube}
For the sake of simplicity we have limited our discussion to \meshplural{}, but our lower bound works for polytopal partitions with one important modification.  That is, the sum in the definition of $N_\gamma$ should not stop in degree $3r+1$, but should continue until all positive contributions are accounted for (in~\cite{RegSplines} a bound is given that could be taken for the upper limit of this sum, but in practice one should simply stop as soon as the contributions switch from positive to negative).  

We compute the bound of Theorem~\ref{thm:LBGenericTet} for the polytopal partition $\Delta$ in Fig.~\ref{fig:cube}, which is a polytopal analog of the three-dimensional Morgan-Scott partition.  
It consists of a cube inside of which we place its dual polytope (the octahedron).  
Then the partition consists of the interior octahedron along with the convex hull of pairs of dual faces.  For example, each vertex of the inner octahedron is paired with a dual square face of the cube and their convex hull is a square pyramid.  The number of interior vertices is $f_0^\circ = 6$,  the number of interior edges is $f_1^\circ = 36$, and the number of interior two-faces if $f_2^\circ = 56$.
Each  interior vertex $\gamma$ is connected by an edge to eight vertices i.e., 
$f_1^\circ (\Delta_\gamma) = 8$ and $f_2^\circ (\Delta_\gamma)=16$  in the star $\Delta_\gamma$.
Thus, $D_\gamma = \bigl\lfloor \frac{3r+1}{2}\bigr\rfloor$ for all $\gamma\in \Delta_0^\circ$.
There are eight vertices $\gamma'$ on the boundary, for each of them we have  $f_2^\circ (\Delta_{\gamma'})=9$, and $f_1^\circ (\Delta_{\gamma'})=3$ in the open stars $\Delta_{\gamma'}$.  
Applying \eqref{eq:LBclosedstar} and \eqref{eq:LBopenstar} yields
\begin{align*}
	\LBcs(\Delta_\gamma, d,r)&= 2\binom{d+2}{2}+\bigl(16-8 t_\tau \bigr)\binom{d+1-r}{2} +  8a_\tau\binom{d+1-q_\tau}{2}+8b_\tau\binom{d+2-q_\tau}{2},
	\intertext{ and}
	\LBos(\Delta_{\gamma'},d,r) &= \binom{d+2}{2}+\bigl(9-3 t_\tau\bigr)\binom{d+1-r}{2}
	+3a_\tau\binom{d+1-q_\tau}{2}+3b_\tau\binom{d+2-q_\tau}{2}.
\end{align*}

\noindent By Theorem \ref{thm:LBGenericTet} the dimension of the spline space then $\dim \spl^r_d(\Delta)\ge \LB(d)$ for   $d\gg 0$, where 

\begin{equation}\label{eq:LB_cube}
\LB(\Delta,d,r) = 7\binom{d+3}{3}+ (56-36\cdot 2)\binom{d+2-r}{3}
+36\binom{d+1}{3}-6\binom{r+3}{3}+6N_{\gamma}+8N_{\gamma'}\ .
\end{equation}

\noindent Every edge $\tau\in\Delta_1^\circ$ is in four two-dimensional faces i.e., $n_\tau=4$. This leads to three values of $t_\tau$: if $r=0$ then $t_\tau =2$; if $r=1$ then $t_\tau=2$; if $r\geq 2$ then $t_\tau=4$\ .

\begin{figure}[htbp]
	\centering
	\includegraphics[scale=0.5]{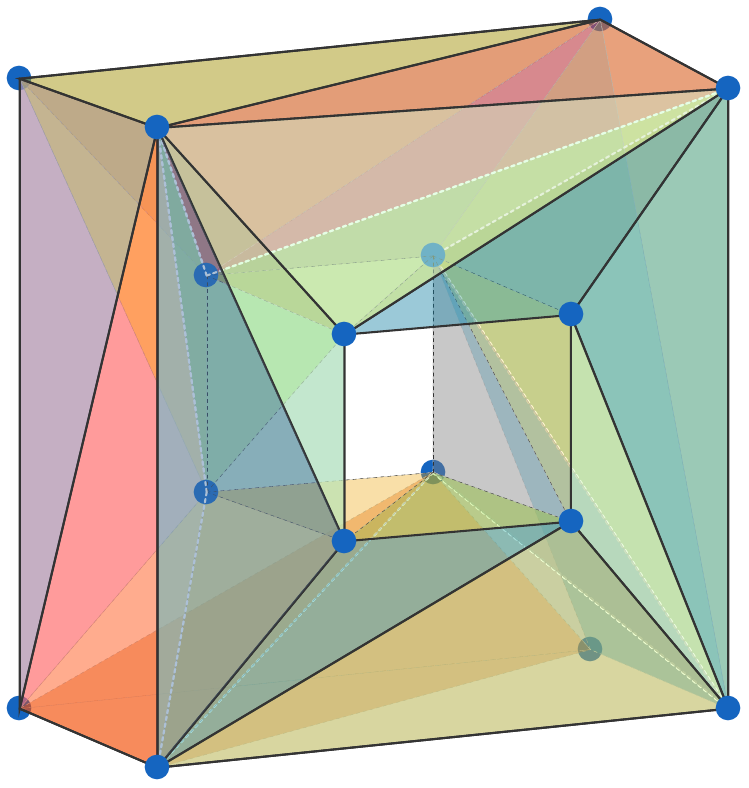}\quad
	\includegraphics[scale=0.78]{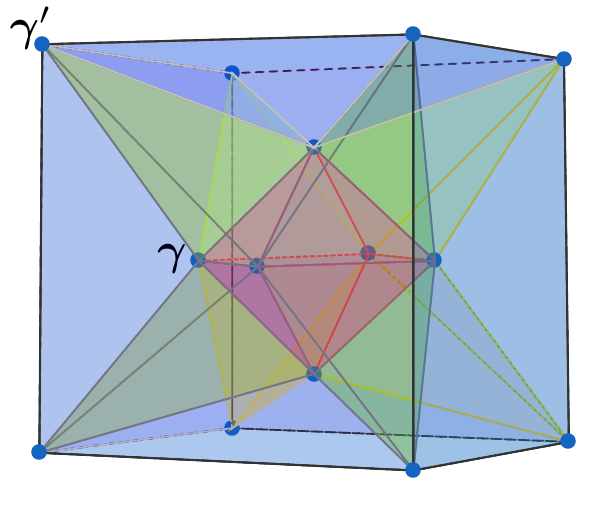}
	\caption{Square--shaped torus in Example \ref{ss:torus} (left), and the non-simplicial polyhedral partition in Example \ref{subsec:cube} (right).}\label{fig:cube}
\end{figure}

\noindent \emph{Case 1.} Let $r=0$, then $t_\tau=2$, $q_\tau =2$,  $a_\tau=0$, and $b_\tau=1$  for all $\tau\in\Delta_1^\circ$, and $D_\gamma = 0$ for all $\gamma\in \Delta_0^\circ$. It follows,
\begin{align*}
	\LBcs(\Delta_\gamma,d,0)&= 2\binom{d+2}{2} +  8\binom{d}{2}, \text{ and \;}\\
	\LBos(\Delta_{\gamma'},d,0)&= \binom{d+2}{2}+(9-3\cdot 2)\binom{d+1}{2}
	+3\binom{d}{2}.
\end{align*}
From \eqref{eq:Ngamma}, we have $N_\gamma= N_{\gamma'}= 0$. Therefore,
\begin{equation}\label{eq:cube0}
\LB(\Delta,d,0) = 7\binom{d+3}{3}+ (56-36\cdot 2)\binom{d+2}{3}
+36\binom{d+1}{3}-6\ 
\ = \ \frac{9}{2} d^3 - d^2 +\frac{3}{2}d +1. 
\end{equation}
\noindent \emph{Case 2.} If $r=1$, then $t_\tau= 3$, $q_\tau= 3$, $a_\tau=0$, and $b_\tau=2$ for all $\tau\in\Delta_1^\circ$, and $D_\gamma= 2$ for all $\gamma\in\Delta_0^\circ$. It follows, 
\begin{align*}
	\LBcs(\Delta_\gamma,d,1)&= 2\binom{d+2}{2} + (16-8\cdot 3)\binom{d}{2}+  8\cdot 2 \binom{d-1}{2}\ ,	 \text{  and \quad}\\
	\LBos(\Delta_{\gamma'},d,1)&= \binom{d+2}{2}+3\cdot 2\binom{d-1}{2}\ .
\end{align*}
From \eqref{eq:Ngamma} we have $N_\gamma= 2$ and $N_{\gamma'}= 0$. Therefore,
\begin{align}
	\LB(\Delta,d,1) & = 7\binom{d+3}{3}+ (56-36\cdot 3)\binom{d+1}{3}
	+36\cdot 2\binom{d}{3}-6\cdot 4 +6\cdot 2\nonumber\\
	& = \ \frac{9}{2} d^3 - 29d^2 +\frac{91}{2}d -5\ . \label{eq:cube1}
\end{align}
\noindent \emph{Case 3.} For every $r\geq 2$, we have $t_\tau=4$. 
We write the explicit formula for $r=2$, the other cases follow similarly. 
We have $q_\tau= 4,\  a_\tau=0$ and $b_\tau=3$ for all $\tau\in \Delta_1^\circ$, and $D_\gamma = 3$. Then,
\begin{align*}
	\LBcs(\Delta_\gamma,d,2)&= 2\binom{d+2}{2} + (16-8\cdot 4)\binom{d-1}{2}+  8 \cdot 3 \binom{d-2}{2}\ ,	 \text{ \; and \quad}\\
	\LBos(\Delta_{\gamma'},d,2)&= \binom{d+2}{2}+(9-3\cdot 4)\binom{d-1}{2}+3\cdot 3\binom{d-2}{2}\ .
\end{align*}
From \eqref{eq:Ngamma} we have $N_\gamma= 18$ and $N_{\gamma'}= 3$. Therefore,
\begin{align}
	\LB(\Delta,d,2) &= 7\binom{d+3}{3}+ (56-36\cdot 4)\binom{d}{3}
	+36\cdot 3\binom{d-1}{3}-6\cdot 10 +6\cdot 18+8\cdot 3\nonumber\\
	& = \frac{9}{2} d^3 - 57d^2 +\frac{363}{2}d -29\ .\label{eq:cube2}
\end{align}
The bounds \eqref{eq:cube0}, \eqref{eq:cube1}, and \eqref{eq:cube2} are the polynomials $P^1_d(\Delta),P^2_d(\Delta),$ and $P^3_d(\Delta)$, respectively. In Table \ref{tbl:cube} we record the values fo $\LB(\Delta,d,r)$ along with the lower bound obtained in \cite{D3}. 
\begin{table}
	\centering
	\renewcommand{\arraystretch}{1.2}
	\begin{tabular}{c|c||c|c|c|c}
		$r$&$d$&$\binom{d+3}{3}$& \LB \cite{D3}  &$\LB(d)$&\text{gendim}\\
		
		\hline
		
		1 & 2 &   10    &  10   &    6    &    10      \\
		1 & 3 &   20    &  20	&   -8    &    20      \\
		1 & 4 &   35    &  35   &	 1    &    35      \\
		1 & $\pmb{5}$ &   56    &  56   &   60    &    60      \\
		1 & 6 &   84    & 160   &  196    &   196      \\
		
		\hline
		
		2 & 8 &  165    &   165   &     79    &   165     \\
		2 & $\pmb{9}$ &  220    &   220   &    268    &   268     \\
		2 & 10 & 286    &   352   &    586    &   586     \\
		2 & 11 & 364    &   826   &   1060    &  1060     \\
		2 & 12 & 455    &  1483   &   1717    &  1717     \\
		2 & 13 & 560    &  2350   &   2584    &  2584     \\
		2 & 14 & 680    &  3454   &   3688    &  3688     \\		
		
		\hline
		
		3 & 11 & 364    &  364   &     148   &   364     \\
		3 & 12 & 455    &  455   &     425   &   455     \\
		3 &$\pmb{13}$ & 560    &  560   &     856   &   856     \\
		3 & 14 & 680    &  988   &    1468   &  1468     \\	
		3 & 15 & 816    &  1808  &    2288   &  2288     \\
		3 & 16 & 969    &  2863  &    3343   &  3343     \\
	\end{tabular}
	\caption{Bounds for the non-simplicial partition in Example~\ref{subsec:cube},  Fig.~\ref{fig:cube} (right).}\label{tbl:cube}
\end{table}

\section{Concluding Remarks}\label{sec:ConcludingRemarks}

\begin{remark}\label{rem:HilbertPolynomial}
	The dimension $\dim \spl^r_d(\Delta)$ of splines on $\Delta$ is a polynomial in $d$ when $d\gg 0$; this polynomial is known as the \textit{Hilbert polynomial} of $\spl^r(\wDelta)$ in algebraic geometry.  Theorem~\ref{thm:LBGenericTet} gives a lower bound on the Hilbert polynomial of $\spl^r(\wDelta)$.  For some value of $d$, $\dim \spl^r_d(\Delta)$ will begin to agree with the Hilbert polynomial.  
	In algebraic geometry there is an integer which bounds when $\dim \spl^r_d(\Delta)$ becomes polynomial, known as the \textit{Castelnuovo-Mumford regularity} of $\spl^r(\wDelta)$.  It would be interesting to bound the the regularity of $\spl^r(\wDelta)$ for \meshplural{}, perhaps by extending methods from~\cite{RegSplines}.
\end{remark}

\begin{remark}\label{rem:AS8r+1}
	We suspect that our formula in Theorem~\ref{thm:LBGenericTet} is a lower bound on $\dim \spl^r_d(\Delta)$ for $d\ge 8r+1$ by the following reasoning.
	In~\cite[Theorem~24]{LocSup}, Alfeld, Schumaker, and Sirvent prove that
	$
	\dim \spl^r_d(\Delta)=\sum_{\beta\in\Delta}|\mathcal{D}(\beta)|
	$
	for $d\ge 8r+1$, where the sum runs across all simplices $\beta\in\Delta$ and $\mathcal{D}(\beta)$ is a \textit{minimal determining set} for the simplex $\beta$.    
	Counting the size of the sets $|\mathcal{D}(\beta)|$ gives rise to expressions using binomial coefficients using the same Convention~\ref{conv:BinomialCoefficients}.  For $r=1$ these are counted explicitly in~\cite{ASWTet}, while counts for more general $r$ (with supersmoothness) may be found in~\cite{AS89}.  We expect that for a fixed $r$ and $d\ge 8r+1$, $|\mathcal{D}(\beta)|$ is a polynomial of degree $\dim \beta$ for all $\beta\in\Delta$.  If so, then $\sum_{\beta\in\Delta}|\mathcal{D}(\beta)|$ is a polynomial for $d\ge 8r+1$, and this is the Hilbert polynomial of $\spl^r(\wDelta)$.  Since the formula in Theorem~\ref{thm:LBGenericTet} is a lower bound on the Hilbert polynomial of $\spl^r(\wDelta)$ (see Remark~\ref{rem:HilbertPolynomial}), it would follow that it is a lower bound on $\dim \spl^r_d(\Delta)$ for $d\ge 8r+1$.  It would also be interesting to know if~\cite{LocSup} has implications for the regularity of $\spl^r(\wDelta)$ (discussed in Remark~\ref{rem:HilbertPolynomial}).
\end{remark}

\begin{remark}
	Building on Remarks~\ref{rem:HilbertPolynomial} and~\ref{rem:AS8r+1}, we have observed in all the examples of Sections~\ref{exam:intro} and~\ref{sec:Examples} that $\LB(\Delta,d,r)=\dim \spl^r_d(\Delta)$ (when $\Delta$ is generic) for $d$ at least the \textit{initial degree} of $\spl^r_d(\Delta)$; that is, the bound begins to give the exact dimension of the spline space as soon as there are non-trivial splines.  To prove this one would have to know (1) that $\LB(\Delta,d,r)$ agrees with $\dim \spl^r_d(\Delta)$ for $d\gg 0$ and (2) that the regularity of $\spl^r(\wDelta)$ (see Remark~\ref{rem:HilbertPolynomial}) is very close to the initial degree of $\spl^r(\Delta)$.  We discuss (1) in Remark~\ref{rem:exact}.  We expect (2) to be quite difficult; a similar statement is not even known for generic triangulations, although we expect it to be true as we indicate in Remark~\ref{rem:exact}.
\end{remark}

\begin{remark}\label{rem:exact}
	In all of the examples in Sections~\ref{exam:intro} and~\ref{sec:Examples}, if $d\gg 0$ and $\Delta$ is generic we have $\LB(\Delta,d,r)=\dim \spl^r_d(\Delta)$; in other words $\LB(\Delta,d,r)$ is the Hilbert polynomial of $\spl^r(\wDelta)$ when $\Delta$ is generic.  This is not always the case, although it is only possible for $\LB(\Delta,d,r)$ to differ from $\dim\spl^r_d(\Delta)$ by a constant in large degree.  In fact, the only term in which we can have error is the approximation provided by Proposition~\ref{prop:AssHom} to the constant $C$ which is equal to $\dim H_2(\calR/\calJ[\wDelta])$ for $d\gg 0$.  If $\gamma$ is a boundary vertex, we see from Proposition~\ref{prop:AssHom} that its contribution to $C$ is
	\[
	\sum_{i=0}^{3r+1} \dim \hspl^r_i(\Delta_\gamma)-\LBos(\Delta,i,r)\ .
	\]
	If $\dim\hspl^r_i(\Delta_\gamma)=\max\left\{\binom{i+2}{2},\LBos(\Delta,i,r)\right\}$ for $0\le i\le 3r+1$, then this contribution coincides exactly with $\sum_{i=0}^{3r+1} \dim \left[\binom{i+2}{2}-\LBos(\Delta,i,r)\right]_+=\sum_{i=0}^{3r+1} \dim \left[-\chi(\calJ[\Delta],i)\right]_+$ and we capture the entire contribution of the boundary vertex $\gamma$ to $C$.
	
	If $\gamma$ is an interior vertex, the proof of Theorem~\ref{thm:WhitelyGenericLowDegree} in Section~\ref{sec:general_lowerbound} shows that its contributions to $C$ in degree $d\le D_\gamma$ can be accounted for; in particular the term $\dim \tJ(\gamma)_d$ for $d\le D_\gamma$ appears both in $C$ and in the Euler characteristic of $\calJ$ with opposite signs, and so it cancels.  By Propositions~\ref{prop:AssHom} and~\ref{prop:EulerCharBounds}, the contribution of $\gamma$ to $C$ in degrees $d> D_\gamma$ is
	\[
	\sum_{i=D_\gamma+1}^{3r+1} \dim \hspl^r_i(\Delta_\gamma)-\LBcs(\Delta,i,r)\ .
\]
	If $\dim\hspl^r_i(\Delta_\gamma)=\max\left\{\binom{i+2}{2},\LBcs(\Delta,i,r)\right\}$ for $i>D_\gamma$ then we again capture all of the contribution of the interior vertex $\gamma$ to $C$.
	
	This leads us to Questions~7.1 and~7.2 in~\cite{PaperA}, namely, is it typically true that
	\begin{equation}\label{eq:7.2}
	\dim \hspl^r_d(\Delta)=\max\left\{\binom{d+2}{2}, \LBos(\Delta,d,r)
	\right\}
	\end{equation}
	when $\Delta$ is a generic open vertex star, and 
	that for $d>D_\gamma$ and $\Delta$ a generic closed vertex star
	\begin{equation}\label{eq:7.1}
	\dim \hspl^r_d(\Delta)=\max\left\{\binom{d+2}{2},\LBcs(\Delta,d,r)\right\}?
	\end{equation}
	(Theorem~\ref{thm:WhitelyGenericLowDegree} shows that $\dim \hspl^r_d(\Delta)=\binom{d+2}{2}$ when $d\le D_\gamma$ and $\Delta$ is a generic closed vertex star.)
	There are configurations for open vertex stars, discussed in~\cite{PaperA}, where it is \textit{not} true that $\dim \hspl^r_d(\Delta)=\max\left\{\binom{d+2}{2}, \LBos(\Delta,d,r)\right\}$ even for generic vertex positions.  If such a configuration is present as the star of a boundary vertex inside of a larger \mesh, then our lower bound will \textit{not} give the exact dimension in large degree.  We do raise the possibility in Question~7.2 of~\cite{PaperA} that there could be finitely many \textit{sub-configurations} which serve as obstructions to the correctness of Equation~\eqref{eq:7.2} when $\Delta$ is generic.  We are not aware of any configurations where Equation~\eqref{eq:7.1} fails for generic vertex positions when $d>D_\gamma$.
\end{remark}

\begin{remark}
	The formula we give in Theorem~\ref{thm:LBGenericTet} is a lower bound for $\dim\spl^r_d(\Delta)$ for $d\gg 0$ when $\Delta$ is \textit{generic}.  That is, it only depends on purely combinatorial information of $\Delta$ such as how many triangular faces are incident upon a given edge, and not on geometric information such as whether the linear span of these triangular faces coincide.  It is well-known that such coincident linear spans cause a jump in the dimension of $\spl^r_d(\Delta)$.  As we indicate in~\cite[Example~6.2]{PaperA}, our techniques can sometimes be adjusted to improve the lower bound $\LB(\Delta,d,r)$ for these types of special positions.  We leave this as a future research direction.  Other special positions, such as the special positions of the Morgan-Scott configuration, may depend on global geometry which is invisible to our methods. 
\end{remark}

\end{document}